\numberwithin{equation}{section}
\numberwithin{figure}{section}
\DeclarePairedDelimiter\abs{\lvert}{\rvert}%
\newcommand{\Real}{\mathbb{R}}
\newcommand{\cA}{\mathcal{A}}
\newcommand{\cB}{\mathcal{B}}
\newcommand{\cH}{\mathcal{H}}
\newcommand{\p}{\partial}
\newcommand{\Om}{\varOmega}
\newcommand{\La}{\lambda}
\newcommand{\X}{\mathcal{X}}
\newcommand{\U}{\mathcal{U}} 
\newcommand{\V}{\mathcal{V}}
\newcommand{\M}{\mathcal{M}}
\newcommand{\N}{\mathcal{N}}
\newcommand{\W}{\mathcal{W}}
\newcommand{\D}{\mathcal{D}}
\newcommand{\Na}{\nabla}
\newcommand{\pt}{\p_t}
\newtheorem{theorem}{Theorem} 
\newtheorem{lemma}[theorem]{Lemma}
\newtheorem{proposition}[theorem]{Proposition}
\newtheorem{definition}[theorem]{Definition}
\DeclareMathAlphabet{\mathscrbf}{OMS}{mdugm}{b}{n}
\providecommand{\keywords}[1]{\textbf{\textit{Key words.}} #1}
\providecommand{\MSC}[1]{\textbf{\textit{MSC}} #1}
\title{Multiscale Galerkin approximation scheme for a system of quasilinear parabolic equations}
\author{
Ekeoma R. Ijioma\thanks{MACSI, Department of Mathematics and Statistics, University of Limerick, Ireland.~(Email: e.r.ijioma@gmail.com)} 
~and~Stephen E. Moore\thanks{Katholische Hochschulgemeinde der Di\"ozese Linz, Petrinumstrasse 12/8/D220, A-4040, Linz.~(Email: moorekwesi@gmail.com)} }
\begin{document}

\maketitle

\begin{abstract}
We discuss a multiscale Galerkin approximation scheme for a system of coupled quasilinear parabolic equations. These equations arise from the upscaling of a pore scale filtration combustion model under the assumptions of large Damkh\"oler number and small P\'eclet number. The upscaled model consists of a heat diffusion equation and a mass diffusion equation in the bulk of a macroscopic domain. The associated diffusion tensors are bivariate functions of temperature and concentration and provide the necessary coupling conditions to elliptic-type cell problems. These cell problems are characterized by a reaction-diffusion phenomenon with nonlinear reactions of Arrhenius type at a gas-solid interface. We discuss the wellposedness of the quasilinear system and establish uniform estimates for the finite dimensional approximations. Based on these estimates, the convergence of the approximating sequence is proved. The results of numerical simulations demonstrate, in suitable temperature regimes, the potential of solutions of the upscaled model to mimic those from porous media combustion. Moreover, distinctions are made between the effects of the microscopic reaction-diffusion processes on the macroscopic system of equations and a purely diffusion system.
\end{abstract}

\keywords{
 Multiscale modeling, numerical analysis, filtration combustion, multiscale simulations
 } 
\smallskip
\MSC{74Q05, 34A45, 80A25, 37M05}

\section{Introduction}
\label{sec:introduction}
In this paper, we consider a quasilinear parabolic system that arises in the modeling of a fast exothermic chemical process involving a reactive porous medium. This scenario is prevalent in the area of filtration combustion in porous media, which can be seen in such important processes as in the combustion of fixed bed reactors \cite{Andrez99}, waste incinerators \cite{MAHMOUDI16, RYU07} and in smoldering combustion process with potential to transit to flaming; see, e.g., \cite{DODD12, Ijioma2015b,Ijioma2015c} and all references therein. However, there are other areas of physical modeling in porous media in which the governing equations are of quasilinear parabolic type. These include, but are not limited to the modeling of biofilm growths \cite{Blessing15}, solute dispersion in porous media \cite{Harsha}, etc.

The physical situation of interest is the following: a gaseous mixture containing an oxidizer infiltrates a porous medium predominantly by means of diffusion, assuming convective transport is negligible at the level of description. Then, the oxidizer gas reacts (in a non-premixed manner) with the fuel (solid) surface in the presence of heat to produce char (solid product) and the heat that sustains the process. We assume that the concentration of the oxidizer and the concentration of the solid product slowly changes at a scale, which is much larger than the scale at which the chemical reaction takes place. At the chemical reaction scale of the interaction, the reaction is assumed to be fast. This is governed by a nonlinear and first order Arrhenius-type kinetics, which couples a mass and a heat transport problems at the surface of the material. The material of interest is associated with a microstructure, which is regarded as a representative unit cell of the material. The unit cell consists of two distinct parts: a \emph{solid part} and a \emph{gas-filled part}. Actually, this cell serves as a starting point in the construction of a mesoscopic description with periodically varying functions and parameters posed in a domain that is composed of a scaled and periodically translated copies of the representative cell. We point out that this description is inherent in the mathematical theory of periodic homogenization; see, e.g. \cite{Bakhvalov1989, Bensoussan78,Cioranescu99}.

Furthermore, to be able to handle the complex chemical process taking place in the domain with rapidly oscillating properties, an averaged description of the process is often required; for example, the homogenization method based on multiple scale expansions \cite{Ijioma13}, when applied on the mesoscopic problem, results in an effective quasilinear parabolic problem. The latter problem is peculiar since it retains some of the attributes of the microstructure on the derived macroscopic description. That is, the microstructure can be seen as a point in the macroscopic domain where mass and heat exchange occur through a diffusion mechanism. In other words, variations in heat and mass diffusions across the macroscopic domain is influenced by reaction-diffusion processes in the unit cell. The interplay between these processes occurring at the distinct levels of description are linked through nonlinear diffusion coefficients with respect to the macroscopic variables; specifically, at each macroscopic point, a coupled microscopic reaction-diffusion elliptic problems are solved in the unit cells--one for the heat problem and another for the diffusion problem. In spite of the advantages of the reducing the complexity of the original problem by means of the homogenization method, the complexity of the present problem is given by an increased size of the system, since for each macroscopic point a coupled system of cell equations has to be solved. 

Moreover, the structure of the cell problem arises from nonlinear coupling of chemical reaction at the microscopic level, in which the macroscopic variables enter as parameters. However, besides the micro-macro coupling, the coupled macroscopic problems can be formulated as a single equation incorporating the total enthalpy in terms of the macroscopic variables. The peculiar feature of the studied problem from other two-scale homogenized systems in the literature \cite{Lind18a, Lind18b, MunteanRadu:2010, NeussRadu2007} are two-fold: the first is the coupling of the micro-macro problem via the nonlinear effective diffusion tensors, which vary at each point $x$ of the macroscopic domain and at each time $t$. Next is the strong coupling between the distinct physics at the microscopic level.

Thus, our objective is to provide a Galerkin based approximation scheme that uses the structure of the quasilinear system of equations coupled to elliptic boundary value problems in the formulation of finite-dimensional approximations. The function spaces for the approximation of the cell problems consist of tensor products of functions on the macroscopic domain and on a reference unit cell. The use of such tensor products is inspired by the analysis discussed in \cite{MunteanRadu:2010}. A much recent development on the use of Galerkin approximations for multiscale problems can be found in \cite{Lind18a, Lind18b}. In our context, the macroscopic variable enters the cell problems as a parameter; hence, to pass to the limit in the finite-dimensional approximates, no additional compactness arguments for the derivatives of the cell functions in terms of the macroscopic variable $x$ are necessary. 

This paper is organized as follows. In Section \ref{sec:setting}, the mathematical setting of the problem is described and the necessary assumption on functions and data are stated. Section \ref{galerkinapprox} introduces the Galerkin approximation function spaces, the spatial discretization and the prove of uniform estimates, which assure the compactness of the finite-dimensional approximations. In Section \ref{convergence}, the convergence of the Galerkin approximations is given and this is followed by some numerical experiments to demonstrate the multiscale character of the quasilinear parabolic system in Section \ref{numericalresults}.
\begin{figure}[thb!]
\centering
 \begin{subfigure}[b]{0.48\textwidth}
\includegraphics[width=\textwidth]{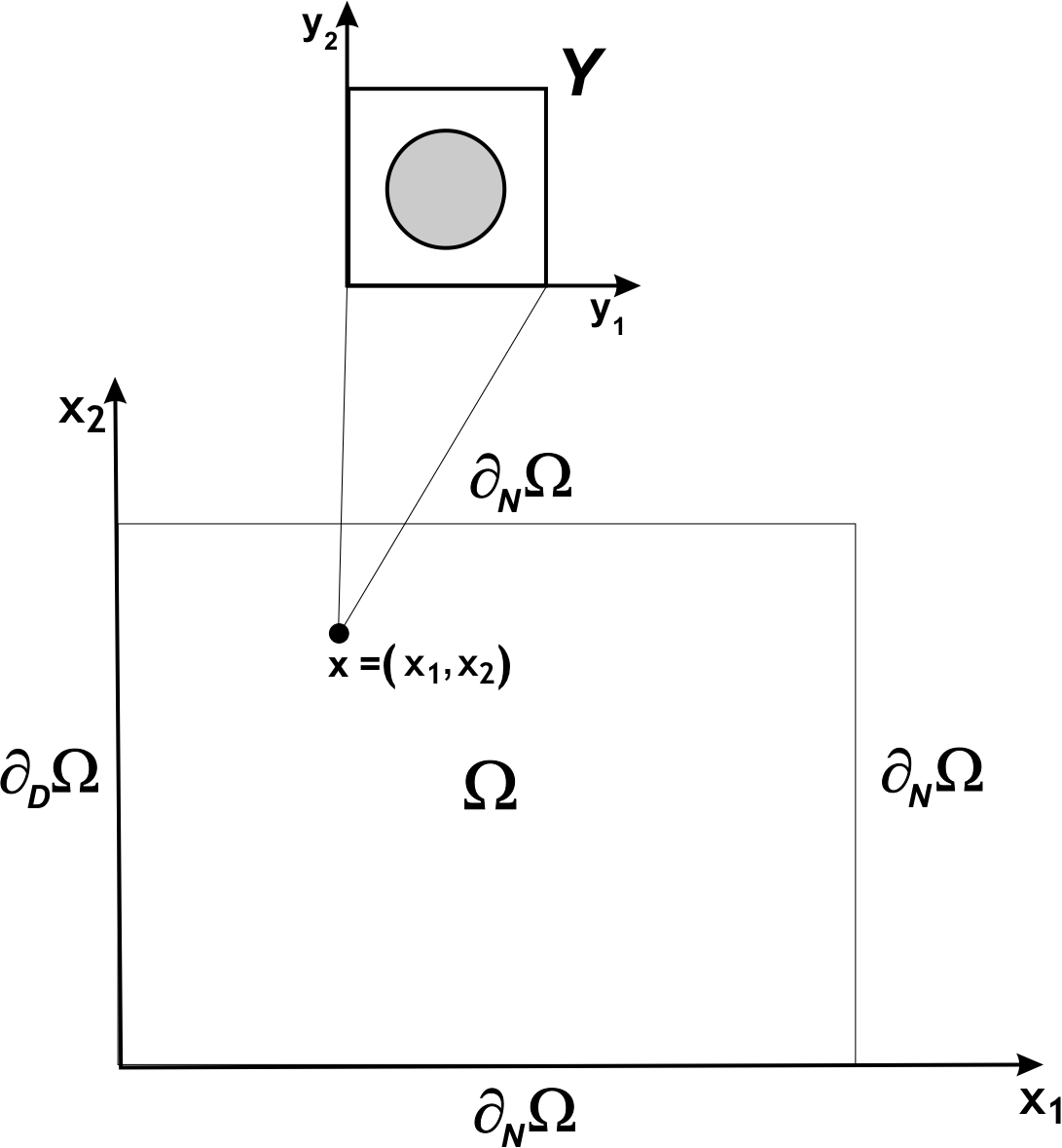}
\caption{}
\end{subfigure}
~
 \begin{subfigure}[b]{0.48\textwidth}
\includegraphics[scale=0.65]{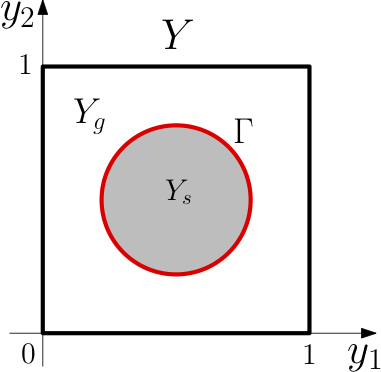}
\caption{}
\end{subfigure}
\caption{Schematic of the macroscopic domain $\Om \subset \mathbb{R}^2$ (a) showing its coupling with a reference unit cell $Y$ for each point $x\in \Om$ and description of the boundary conditions at different parts of the exterior boundary $\p\Om$. The microstructure (b) consists of a solid part $Y_{\rm s}$ and gas part $Y_{\rm g}$ which are separated on by an interface $\Gamma$.}
\label{Fig1}
\end{figure}
%%%%%%%%%%%%%%%%%%%%%%%%%%%%%%%%%%%%%%%%%%%%%%%%%%
\section{Setting of the problem}\label{sec:setting}
%%%%%%%%%%%%%%%%%%%%%%%%%%%%%%%%%%%%%%%%%%%%%%%%%%
We consider a homogenization limit problem of a filtration combustion process derived under the assumption of large Damkh\"oler numbers and small P\'eclet numbers, which results in a system of quasilinear diffusion equations modeling heat and mass diffusion processes. These equations are coupled by nonlocal reactions taking place on a gas-solid interface separating the reactant species at the level of the microstructure of the porous domain. The reactions are incorporated into the global diffusion coefficients and account for a reaction-diffusion phenomena at the macroscopic level. In the sequel, we describe the multiscale geometry of the studied problem.
%%%%%%%%%%%%%%%%%%%%%%%%%%%%%%%%%%%
\subsection{The geometry}\label{eqn:boundaryconditions}
%%%%%%%%%%%%%%%%%%%%%%%%%%%%%%%%%%%
Let $\Om \subset \Real^d, d=2, 3,$ be open and bounded homogeneous domain that approximates the heterogeneous porous medium consisting of a periodic system of fixed microstructures. In our setting, $\Om$ is either a polygon for $d=2$ or a polyhedron for $d=3$ with Lipschitz boundary $\p \Om=\p_{D}\Om \cup \p_{N}\Om$ which is either an edge for $d=2$ or a face for $d=3$, where
\begin{equation}
 \p_{D}\Om :=\p\Om \cap \{x :=(x_1, x_2, \ldots,x_d) \in \Real^d \mid x_1=0\} ~\mbox{  and  }~ \p_{N}\Om :=\p\Om \setminus \p_{D}\Om,
\end{equation}
are the Dirichlet and Neumann boundary parts, respectively. For each point $x \in \Om$, let the representative cell be denoted by $Y=[0,1]^d.$ The representative cell consists of two distinct parts-a solid part denoted by $Y_{\rm s}$ and gas-filled part denoted by $Y_{\rm g}$, i.e. \mbox{$Y:=Y_{\rm s} \cup Y_{\rm g}$.} We denote the smooth boundary of the solid part of $Y$ by $\Gamma.$ The gas-filled part is denoted by \mbox{$Y_{\rm g}=Y\setminus \overline{Y_{\rm s}}$}. On the gas-solid boundary $\Gamma$, interface conditions are prescribed whereas  periodic boundary conditions are prescribed on the exterior boundary $\p Y$ of $Y$.  The outer unit normal to the boundaries of the domains is denoted by $\vec n$.
Furthermore, let $T>0$ be an arbitrarily chosen final time such that we can use the following time-space domains:
\begin{align*}
&\Om^T = {(0,T]}\times \Om, \quad \p_{D}\Om^{T} =  {(0,T]}\times \p_{D}\Om, \quad  \p_{N}\Om^{T} =  {(0,T]}\times \p_{N}\Om.
\end{align*}
%%%%%%%%%%%%%%%%%%%%%%%%%%%%
\subsection{The mathematical model}
%%%%%%%%%%%%%%%%%%%%%%%%%%%%
\noindent Let the scaled temperature of the gas-solid system and the scaled concentration of the gaseous oxidizer be given respectively by $u(t,x)$ and $v(t,x)$, then the heat and mass diffusion equations can be written as
\begin{align}
\label{heatmassdiffusion}
\begin{dcases}
c\dfrac{\p u}{\p t} = \Na\!\cdot\Big(\La(u,v)\Na u\Big), &\text{in} \quad \Om^T,\\
\theta\dfrac{\p v}{\p t} = \Na\!\cdot\Big(\D(u,v)\Na v\Big), &\text{in} \quad  \Om^T,\\
u = u_{D}, v= v_{D},&\text{on} \quad \p_D\Om^T,\\
\vec n\!\cdot\Na u = \vec n\!\cdot\Na v = 0,&\text{on} \quad \p_N\Om^T,\\
u(t,x) = u_{I}, v(t,x)= v_{I}, &\text{in}  \quad \overline{\Om}, t=0.
\end{dcases}
\end{align}
{\allowdisplaybreaks
Here, $u_{I}$ and $v_{I}$ are the prescribed initial data and $u_{D}$ and $v_{D}$ are the Dirichlet boundary data on $\p_{D}\Om^{T}$ and $\vec n$ is the outward unit normal. The effective parameters appearing in the equations are the porosity $\theta$ and the effective volumetric heat capacity $c$ respectively given by:
\begin{align*}
\theta =\dfrac{1}{|Y|}\int\limits_{Y_{\rm g}}dy \quad \text{and} \quad 
c  = \int\limits_{Y_{\rm g}} c_{\rm g}dy + \int\limits_{Y_{\rm s}}c_{\rm s}dy,
\end{align*}
where $c_{\rm g}$ and $c_{\rm s}$ are respectively the heat capacities for gas and solid parts of the microstructure. The diffusion matrices are given by \mbox{$\La_{i,j}:\Real\times\Real \rightarrow \Real$}
\begin{align} %\chi,\omega
\label{Lmatrix}
\La_{ij}(s,r) & =\int\limits_{Y_{\textrm g}}\La_{\textrm g}(y)\big(e_i + \Na_y\chi_{\textrm{g},i}\big)\!\cdot\!\big(e_j + \Na_y\chi_{\textrm{g},j}\big)dy
\notag \\ 
& +\int\limits_{Y_{\textrm s}}\La_{\textrm s}(y)\big(e_i + \Na_y\chi_{\textrm{s},i}\big)\!\cdot\!\big(e_j + \Na_y\chi_{\textrm{s},j}\big)dy\\\nonumber
&+Qf(s)\int\limits_{\Gamma}\Big[r\chi_i\chi_j + \dfrac{s^2}{2u_{\textrm a}}\big(\omega_j\chi_i + \omega_i\chi_j\big)\Big]d\sigma,\quad i,j=1,\ldots,d, \mbox{for all $s,r \in \Real$}
\end{align}
and 
\begin{align}
\label{Dmatrix}
\D_{ij}(s,r) & = \int\limits_{Y_{\textrm g}}D(y)\big(e_i + \Na_y\omega_{i}\big)\!\cdot\!\big(e_j + \Na_y\omega_{j}\big)dy \notag \\
& -f(s)\int\limits_{\Gamma}\Big[\dfrac{s^2}{u_{\textrm{a}}}\omega_i\omega_j + \dfrac{r}{2}\big(\omega_i\chi_j + \omega_j\chi_i\big)\Big]d\sigma,\quad i,j=1,\ldots,d, \mbox{for all $s,r \in \Real,$}
\end{align}
where $(\chi,\omega) = \big(\chi_j,\omega_j\big)_{j=1,\ldots,d}$ is the solution of the coupled cell problem 
%-----------------------------------------
\begin{align}
\label{cellproblem}
\begin{dcases}
-\Na_y\!\cdot\!(\La_{\textrm g}(y)(\Na_y\chi_{\textrm{g},j} + e_j) )=0,&\mbox{in $\Om \times Y_{\textrm g},$}\\
-\Na_y\!\cdot\!(\La_{\textrm s}(y)(\Na_y\chi_{\textrm{s},j} + e_j)) =0,&\mbox{in $\Om\times Y_{\textrm s},$}\\
\chi_{\textrm{g},j}-\chi_{\textrm{s},j}=0,&\mbox{on $\Om\times\Gamma,$}\\
\big[\La_{\textrm s}(y)(\Na_y\chi_{\textrm{s},j}+e_j)-\La_{\textrm g}(y)(\Na_y\chi_{\textrm{g},j} + e_j)\big]\!\cdot\!\vec n=Qf(u)\cH(u,v,\chi_j,\omega_j),&\mbox{on $\Om\times \Gamma,$}\\
-\Na_y\!\cdot\!(D(y)(\Na_y\omega_{j} + e_j) ) = 0,&\mbox{in $\Om\times Y_{\textrm g},$}\\
-D(y)(\Na_y\omega_{j} + e_j)\!\cdot\!\vec n=-f(u)\cH(u,v,\chi_j,\omega_j),&\mbox{on $\Om\times\Gamma,$}\\
y \rightarrow (\chi(y),\omega(y)) \mbox{  is $Y$-periodic,}
\end{dcases}
\end{align}
where $u$ and $v$ are solutions of \eqref{heatmassdiffusion}. In \eqref{cellproblem}, $\La_{\rm g}$ and  $\La_{\rm s}$  are respectively the heat conductivities for gas and solid parts of the microstructure, $Q>0$ is the heat release, $u_{\rm a}$ is the activation temperature and $A$ is the pre-exponential factor in the Arrhenius kinetics. We assume all physical quantities to be constants and dimensionless. Furthermore, the natural choices for the nonlinear terms $f$ and $\cH$ are those deduced from the homogenization procedure; see, e.g. \cite{Ijioma2016}. In particular, we adopt the following form in our calculations:
\begin{align}
\label{Arrhenius}
f(s) = \dfrac{Au_{\rm a}}{s^2}\exp\Big(-\dfrac{u_{\rm a}}{s}\Big) \quad \text{and} \quad 
\cH(s,r,\varphi,\psi) = r\varphi + \dfrac{s^2}{u_{\rm a}}\psi.
\end{align}
\subsection{Working hypothesis }
\noindent We assume the following for the effective tensors: 
\begin{itemize}
\item[($H1$)] The effective diffusion tensors are continuous and satisfy the following
\begin{align*}
&0 < \La_0 \leq \La(s,r) \leq \La_1,\qquad 0 < D_0 \leq \D(s,r) \leq D_1\\
&\abs{\p_s\La(s,r)} + \abs{\p_r\La(s,r)} \leq \beta_0, \quad \abs{\p_s\D(s,r)} + \abs{\p_r\D(s,r)} \leq \beta_1,
\end{align*}
for all $s, r\in \Real$ and for $\La_i,D_i,\beta_i\in\Real, i=0,1.$
\end{itemize}
For the physical properties of the material restricted to the pore domain, we assume:
\begin{itemize}
\item[($H2$)] The molecular diffusion is periodic, isotropic and restricted to the gas region, i.e., $D\in L^{\infty}_{\#}(Y_{\rm g})$. In addition, it satisfies the uniformly coercive property, i.e., there exists a constant $C>0$ such that, for any $\xi \in \Real^d$,
$$D(y)\xi\cdot\xi \ge C|\xi|^2 \mbox{ a.e. $y\in Y_{\rm g}.$ }$$
Similarly, the thermal conductivity is periodic, isotropic and varies in the gas and solid regions, i.e., $\lambda \in L^{\infty}_{\#}(Y)$ and
\begin{align}
\lambda(y) = 
\begin{dcases}
\lambda_{\rm g}, &\mbox{in $Y_{\rm g}$,}\\
\lambda_{\rm s}, &\mbox{in $Y_{\rm s}.$}
\end{dcases}
\end{align}
In addition, it satisfies the uniformly coercive property, i.e., there exists a constant $C>0$ such that, for any $\xi \in \Real^d$,
$$\lambda(y)\xi\cdot\xi \ge C|\xi|^2 \mbox{ a.e. in $Y.$ }$$
The heat capacity is periodic and it is defined in the gas and the solid regions, i.e.,  $c \in L^{\infty}_{\#}(Y)$ is such that $\hat{c} \leq c \leq \tilde{c}$ and
\begin{align}
c(y) = 
\begin{dcases}
c_{\rm g}, &\mbox{in $Y_{\rm g}$,}\\
c_{\rm s}, &\mbox{in $Y_{\rm s}.$}
\end{dcases}
\end{align}
\end{itemize}
For the reaction terms, we assume
\begin{itemize}
\item[($H3$)] Let $f: \Real\rightarrow \Real$ be such that, for appropriate choice of the parameter $u_{a} \gg 0$, $f(s) \leq As$  and hence globally Lipschitz for all $s\in \Real$ and $A>0$.
\item[($H4$)] $\cH: \Real\times\Real\times\Real\times\Real \rightarrow \Real$ is Lipschitz continuous with respect to the variables. 
%the parameters appearing in the given functions $f$ and $\cH$ satisfy $u_{\textrm{a}}, Q,A \in \Real_0^+$, where $\Real^+_0=\Real^+\!\setminus\!\{0\}$.
\end{itemize}
For the initial and boundary functions, we assume
\begin{itemize}
\item[($H5$)] $u_{I},v_{I}\in H^2(\Om)\cap L^{\infty}_{+}(\Om)$
\item[($H6$)] $u_{D}, v_{D}\in L^{2}(0,T;H^2(\Om))\cap L^{\infty}_{+}(\Om^{T}).$
\end{itemize}
%------------------------------------------------------------------------------------------------------------------------------------
% WEAK FORMULATION
%------------------------------------------------------------------------------------------------------------------------------------
\subsection{Weak formulation}
\noindent In order to adapt the Galerkin approach to the approximation of the unique weak solution of the system \eqref{heatmassdiffusion}-\eqref{cellproblem}, we make use of the following function spaces in the weak formulation of the problem:
\begin{align*}
&H^1_D(\Om) = \big\{~\varphi\in H^1(\Om) \mid \mbox{$\varphi= 0$ on $\p_D\Om$}~\big\},\\
&H^1_{\#}(Y) =\big\{~\varphi\in H^1(Y) \mid \mbox{$\varphi$ is $Y$-periodic, $\int\limits_{Y} \varphi =0 \,$}~\big\}.
\end{align*}
% Here, $\big\langle \cdot \big\rangle_Y$ represents the mean of a function in $Y.$ We also denote by $V$, the dual space of $H^1_{D}.$ 
%%%%%%%%%%%%%%%%%%%%%%%%%%%%%%%%%%%%%%%%%%%%%%%%%%%%%%%%%%%%%
\begin{definition}
\label{Weak}
A quadruple of functions $(u,v,\chi,\omega)$ with 
\begin{align*}
&(u-u_D)\in L^2((0,T);H^1_D(\Om)),(v-v_D)\in L^2((0,T);H^1_D(\Om)),\p_t u \in L^2(0,T;V), \\
&\p_t v \in L^2(0,T;V),\chi \in L^2(\Om;H^1_{\#}(Y)^d), \omega \in L^2(\Om;H^1_{\#}(Y_{\rm g})^d)
\end{align*}
is called a weak solution of \eqref{heatmassdiffusion}-\eqref{cellproblem} if for a.e. $t\in (0,T)$ the following identities hold
\begin{align}
\label{varfHomo}
&\int\limits_{\Om}c\p_t u\varphi dx + \int\limits_{\Om}\La(u,v)\Na u\Na\varphi dx+\int\limits_{\Om}\theta\p_t v\vartheta dx +  \int\limits_{\Om}\D(u,v)\Na v\Na\vartheta dx=0,\\
\label{varfCell} 
&\int\limits_{\Om\times Y_g}\La_{g}\big(e_{j} + \Na_{y}\chi_{g,j}\big)\cdot\Na_{y}\phi dydx + \int\limits_{\Om\times Y_s}\La_{s}\big(e_{j} + \Na_{y}\chi_{s,j}\big)\cdot\Na_{y}\phi dydx \notag \\
&+ Q\int\limits_{\Om\times Y_g}D(y)\big(e_{j} + \Na_{y}\omega_{j}\big)\cdot\Na_{y}\psi dydx 
+ Q\int\limits_{\Om}\int\limits_{\Gamma}f(u)\cH(u,v,\chi_j,\omega_j)(\phi-\psi) d\sigma(y)dx=0
\end{align}
for all $(\varphi,\vartheta,\phi,\psi)\in H^1_D(\Om)^2\times L^2(\Om;H^1_{\#}(Y)^d)\times L^2(\Om;H^1_{\#}(Y_{\rm g})^d)$ with $d=2, 3$ and
\begin{eqnarray}
u(0)=u_{I}~\mbox{in $\Om$},~v(0)=v_{I}~\mbox{in $\Om.$}
\end{eqnarray}
\end{definition}
%%%%%%%%%%%%%%%%%%%%%%%%%%%%%%%%%%%%%%%%%%%%%%%%%%%%%%%%%
%The uniqueness of the macroscopic problem \eqref{varfHomo} requires a unique solution to the cell problem as well as uniform coercivity of the diffusion tensor. 
\begin{lemma}
\label{UCell}
For any given values of $s, r\in \Real_{+}$, there exists a unique solution
\[
(\chi,\omega)=(\chi_j,\omega_j)_{j=1,\ldots,d}\in [H^1_{\#}(Y)]^d\times [H^1_{\#}(Y_{\rm g})]^d,
\]
to the cell problem \eqref{cellproblem} up to the addition of a constant multiple of ($C,C$) with $C\in\Real$. 
\end{lemma}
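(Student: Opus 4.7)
The plan is to exploit the fact that, for fixed $s, r \in \Real_{+}$, the cell problem \eqref{cellproblem} is \emph{linear} in $(\chi_{j}, \omega_{j})$: indeed, $f(s)$ is then a fixed positive constant and $\cH(s, r, \chi_{j}, \omega_{j}) = r\chi_{j} + (s^{2}/u_{\rm a})\omega_{j}$ is linear in $(\chi_{j}, \omega_{j})$. The system therefore decouples into $d$ independent linear elliptic problems, one for each direction $j \in \{1, \dots, d\}$, each posed on the Hilbert space $V := H^{1}_{\#}(Y) \times H^{1}_{\#}(Y_{\rm g})$, and I would treat them by a Lax--Milgram / Fredholm alternative argument.

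First I would fix $j$ and read off, from the pointwise-in-$x$ version of \eqref{varfCell}, the bilinear form $a(\cdot, \cdot)$ on $V \times V$ and the linear functional $L(\cdot)$ on $V$ generated by the $e_{j}$ terms. Continuity of $a$ and $L$ is immediate from the $L^{\infty}$-bounds on $\lambda_{\rm g}, \lambda_{\rm s}, D$ in hypothesis $H2$, the bound $f(s) \leq As$ from $H3$, and the continuity of the trace maps $H^{1}(Y) \to L^{2}(\Gamma)$ and $H^{1}(Y_{\rm g}) \to L^{2}(\Gamma)$. The principal part
\[
a_{0}\bigl((\chi, \omega), (\phi, \psi)\bigr) := \int_{Y} \lambda\, \Nay \chi \cdot \Nay \phi\, dy + Q\!\int_{Y_{\rm g}} D\, \Nay \omega \cdot \Nay \psi\, dy
\]
is coercive on $V$ by the uniform coercivity of $\lambda$ and $D$ in $H2$ combined with the Poincaré inequality for zero-mean, $Y$-periodic functions. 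The remaining interfacial contribution $Qf(s) \int_{\Gamma} \cH(\chi, \omega)(\phi - \psi)\, d\sigma$ is a sign-indefinite bilinear form in $(\chi, \omega)$, but since the trace embedding $H^{1} \hookrightarrow L^{2}(\Gamma)$ is compact (Rellich--Kondrachov), it defines a compact perturbation of $a_{0}$; hence $a$ satisfies a Gårding inequality and the Fredholm alternative applies on $V$.

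The hard step will be establishing uniqueness of the homogeneous problem, which Fredholm then converts into existence. My approach would be to test the homogeneous $\chi$-equation against $\chi$ and the homogeneous $\omega$-equation against $\omega$, producing two energy identities whose sign-indefinite boundary terms must be absorbed. Combining these identities with the compatibility condition $\int_{\Gamma} \cH\, d\sigma = 0$ (obtained by integrating either homogeneous equation over its domain and invoking $Y$-periodicity of the fluxes), with the trace--Poincaré chain on zero-mean functions, and with the bound on $f(s)$ from $H3$, I would deduce $\Nay \chi \equiv 0$ in $Y$ and $\Nay \omega \equiv 0$ in $Y_{\rm g}$. The zero-mean constraint built into $H^{1}_{\#}$ then forces $\chi = \omega = 0$ in $V$. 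The phrase ``up to addition of a constant multiple of $(C, C)$'' in the statement refers to the one-parameter family of constant pairs $(C_{1}, C_{2})$ satisfying $rC_{1} + (s^{2}/u_{\rm a})C_{2} = 0$ that solve the homogeneous differential problem when the zero-mean normalisation is dropped, and which are precisely filtered out by passage to $V$.
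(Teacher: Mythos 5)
Your structural setup is sound and in several respects more careful than the paper's own argument: you correctly observe that for fixed $s,r$ the cell system is linear and decouples over $j$, that the volume part is continuous and coercive on zero-mean periodic functions by $(H2)$ plus Poincar\'e, and that the interface term is a sign-indefinite but compact perturbation via the trace embedding. The paper takes a different and much blunter route: it applies Lax--Milgram directly on the quotient space $[H^1_{\#}(Y)\times H^1_{\#}(Y_{\rm g})]/\Real(C,C)$ and simply asserts that the \emph{full} bilinear form, interface term included, is coercive there ``by $(H2)$''; it never confronts the indefiniteness at all. So your G\aa rding--Fredholm route is genuinely different, and it correctly isolates where the real difficulty sits.

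The gap is that your reduction to uniqueness of the homogeneous problem is exactly the hard part, and the sketch you give would not close it. Testing with $(\chi,\omega)$, the interface contribution is $Qf(s)\int_\Gamma\big[r\chi^2+(s^2/u_{\rm a}-r)\chi\omega-(s^2/u_{\rm a})\omega^2\big]\,d\sigma$, and the term $-Qf(s)(s^2/u_{\rm a})\int_\Gamma\omega^2\,d\sigma$ is negative and of the same order as the trace norms you would need to control it. The single scalar compatibility relation $\int_\Gamma \cH\,d\sigma=0$ and the trace--Poincar\'e inequality provide no mechanism to absorb it; what is actually needed is a quantitative smallness condition on $Qf(s)s^2/u_{\rm a}$ and $Qf(s)r$ relative to the coercivity constants of $\lambda$ and $D$ (plausible for $u_{\rm a}\gg 0$, since $f(s)s^2/u_{\rm a}=A e^{-u_{\rm a}/s}$, but it must be stated and used). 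Without it, the homogeneous problem may admit nontrivial solutions and the Fredholm alternative yields nothing. Separately, your identification of the kernel as the constants $(C_1,C_2)$ with $rC_1+(s^2/u_{\rm a})C_2=0$ is the correct invariance of the problem (it leaves $\cH$, hence all equations, unchanged), whereas adding $(C,C)$ shifts $\cH$ by $(r+s^2/u_{\rm a})C\neq 0$; so the lemma's ``$(C,C)$'' normalisation, and the paper's quotient space, are imprecise on this point and your version should replace it rather than be reconciled with it.
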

%%%%%%%%%%%%%%%%%%%%%%%%%%%%%%%%%%%%%%%%%%%%%%%%%%%%%%%%
\begin{proof}
It can be shown that the variational formulation \eqref{varfCell} satisfies the assumptions of the Lax-Milgram Lemma. Let the quotient space $[H^1_{\#}(Y)\times H^1_{\#}(Y_{\rm g})]/\Real(C,C)$ of functions defined in $H^1_{\#}(Y)\times H^1_{\#}(Y_{\rm g})$ up to an additive constant vector $(C,C)$ with $C\in\Real$ be associated with the cell solutions $(\chi,\omega)$. It is easily seen that $||\Na \chi ||_{L^2(Y)^d} + ||\Na\omega ||_{L^2(Y_{\rm g})^d}$ is a norm for this space. By $(H2)$, it can be shown that the left hand side of \eqref{varfCell} is coercive on the quotient space. Again, using $(H2)$, the right hand side of \eqref{varfCell}
\begin{align}
\int\limits_{Y}\La(y)e_{j}\Na_{y}\phi dy + Q\int\limits_{Y_{\rm g}}D(y)e_{j}\Na_{y}\phi dy \quad\mbox{for a.e. $x\in \Om,$}
\end{align}
is a continuous linear functional on the quotient space.
\end{proof}
\begin{lemma}\label{coercive}
Assume $(H3)$--$(H4)$, the diffusion tensors $\La(s,r)$ and $\D(s,r)$ are uniformly coercive and bounded in the sense of $(H1)$-$(H2)$.
\end{lemma}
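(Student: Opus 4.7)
The plan is to combine standard periodic-homogenization arguments for the volume contributions in \eqref{Lmatrix}--\eqref{Dmatrix} with trace and Young-type estimates on the interfacial reaction terms. For each $\xi\in\Real^{d}$ and each fixed $(s,r)$, set $\chi^{\xi}:=\sum_{j}\xi_{j}\chi_{j}$ and $\omega^{\xi}:=\sum_{j}\xi_{j}\omega_{j}$; since the cell problem \eqref{cellproblem} is linear in $(\chi_{j},\omega_{j})$ once $(u,v)=(s,r)$ is frozen, these combinations solve the same cell problem with $e_{j}$ replaced by $\xi$. A direct rearrangement of \eqref{Lmatrix}--\eqref{Dmatrix} gives the compact form
\begin{align*}
\xi^{T}\La(s,r)\xi &=\int_{Y}\lambda(y)|\xi+\Nay\chi^{\xi}|^{2}\,dy+Qf(s)\int_{\Gamma}\chi^{\xi}\,\cH(s,r,\chi^{\xi},\omega^{\xi})\,d\sigma,\\
\xi^{T}\D(s,r)\xi &=\int_{Y_{\rm g}}D(y)|\xi+\Nay\omega^{\xi}|^{2}\,dy-f(s)\int_{\Gamma}\omega^{\xi}\,\cH(s,r,\chi^{\xi},\omega^{\xi})\,d\sigma.
\end{align*}

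First, I would derive \emph{a priori} bounds on the correctors by testing the coupled variational form \eqref{varfCell} against $(\phi,\psi)=(\chi^{\xi},\omega^{\xi})$. Invoking microscopic coercivity of $\lambda$ and $D$ from $(H2)$, boundedness of the Arrhenius factor in \eqref{Arrhenius} (the map $s\mapsto u_{\rm a}s^{-2}e^{-u_{\rm a}/s}$ is bounded on $\Real$), the Lipschitz structure of $\cH$ from $(H3)$--$(H4)$, a standard trace inequality, Poincar\'e--Wirtinger on the periodic quotient space, and Young's inequality, yields the bound $\norm{\Nay\chi^{\xi}}_{L^{2}(Y)}+\norm{\Nay\omega^{\xi}}_{L^{2}(Y_{\rm g})}\leq C|\xi|$ with $C$ independent of $\xi$. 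Boundedness of $\La$ and $\D$ then follows by Cauchy--Schwarz on the volume integrals and a trace bound on the interfacial integrals in the displays above, producing the upper constants $\La_{1},D_{1}$ in $(H1)$.

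For coercivity I would begin from the classical periodic-homogenization lower bound $\int_{Y}\lambda(y)|\xi+\Nay\chi^{\xi}|^{2}\,dy\geq\lambda_{0}|Y||\xi|^{2}$, which follows from $(H2)$ combined with Jensen's inequality and the vanishing of $\int_{Y}\Nay\chi^{\xi}\,dy$ by periodicity. To absorb the indefinite-sign interfacial remainder, I would decouple the mixed product $\omega^{\xi}\chi^{\xi}$ by Young's inequality, pass from $L^{2}(\Gamma)$ to $H^{1}(Y)$ via the trace theorem, and insert the a priori bound from the previous paragraph. The crucial quantitative ingredient is the identity $s^{2}f(s)=Au_{\rm a}e^{-u_{\rm a}/s}\leq Au_{\rm a}$, which combined with the regime $u_{\rm a}\gg 0$ postulated in $(H3)$ renders the interfacial perturbation strictly smaller than $\lambda_{0}|Y|$, yielding $\xi^{T}\La(s,r)\xi\geq\La_{0}|\xi|^{2}$. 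The identical scheme applied to the mass tensor produces $\xi^{T}\D(s,r)\xi\geq D_{0}|\xi|^{2}$.

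The main obstacle is that the two cell problems in \eqref{cellproblem} are coupled through a \emph{non-symmetric} interfacial bilinear form -- the heat flux sees $+Qf\cH$ while the mass flux sees $-f\cH$ -- so the system is not the Euler--Lagrange equation of a single convex energy and the boundary contributions to $\La$ and $\D$ have indefinite sign for generic $(s,r)$. Coercivity cannot be extracted by a direct positivity argument and must instead be recovered by quantitatively absorbing the boundary perturbation, using the smallness of the Arrhenius prefactor for $u_{\rm a}\gg 0$ together with tight tracking of the trace and Poincar\'e constants in the $H^{1}$ estimate of the correctors.
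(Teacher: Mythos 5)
Your route is genuinely different from the paper's. The paper argues by sign and by an algebraic combination: since $f\geq 0$, it asserts that the interfacial contribution to $\La$ is nonnegative so that the volume part alone yields the lower bound \eqref{Llbound}, and it then obtains the lower bound \eqref{Dlbound} for $\D$ not by a separate coercivity estimate but by forming $\La+Q\D$ (whose surface part collapses to $Qf(s)\int_\Gamma(\chi^{\xi}-\omega^{\xi})\,\cH\,d\sigma$) and subtracting the bound already established for $\La$. You instead treat the surface integrals as an indefinite perturbation of the classical periodic-homogenization lower bound and try to absorb them quantitatively. Your compact quadratic forms are correct, and your observation that the coupled interfacial bilinear form is non-symmetric (so the system is not the Euler--Lagrange equation of one convex energy) is a more honest account of the difficulty than the paper's one-line positivity claim.

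However, the absorption step has a genuine gap: the interfacial perturbation is not uniformly small over $(s,r)\in\Real\times\Real$, which is what the lemma and $(H1)$ require. The quadratic trace terms in $\xi^{T}\La(s,r)\xi$ carry the coefficients $Qf(s)r$ and $Qf(s)s^{2}/u_{\rm a}$. While $\sup_{s}f(s)=4Ae^{-2}/u_{\rm a}=\Ord(u_{\rm a}^{-1})$, the second coefficient equals $QA\,e^{-u_{\rm a}/s}$, whose supremum over $s>0$ is $QA$ \emph{independently of} $u_{\rm a}$, so taking $u_{\rm a}\gg 0$ does not shrink it; and the first coefficient is unbounded as $\abs{r}\to\infty$. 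Hence the claim that for $u_{\rm a}\gg 0$ the perturbation is ``strictly smaller than $\lambda_{0}|Y|$'' fails without additional hypotheses (smallness of $QA$ relative to the microscopic coercivity and trace constants, or a priori $L^{\infty}$ bounds on $u,v$ restricting $(s,r)$ to a bounded, nonnegative range --- bounds not available at this stage). The same non-small coefficients reappear when you test \eqref{varfCell} with $(\chi^{\xi},\omega^{\xi})$ to derive $\norm{\Nay\chi^{\xi}}_{L^{2}(Y)}+\norm{\Nay\omega^{\xi}}_{L^{2}(Y_{\rm g})}\leq C\abs{\xi}$, so the very first link of your chain already presupposes the absorption you have not secured. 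To repair the argument you must either exploit the sign structure (e.g.\ $r\geq 0$ and the cancellation in $\La+Q\D$) as the paper does, or state an explicit smallness condition on $QA$ against $\lambda_{0}$, $D_{0}$ and the trace constant of $\Gamma$.
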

\begin{proof}
Since $f(s)\leq \alpha$ implies that $\D(s,r)\leq \int\limits_{Y_{\rm g}}D(y)dy$, and hence $\D(s,r)$ and $\La(s,r)$ are uniformly bounded.~Given that the diffusion tensors \eqref{Lmatrix} and \eqref{Dmatrix} are symmetric and $f(s)\geq 0$, we have that 
\begin{eqnarray}
\label{Llbound}
\La(s,r) \geq \Bigg[\int\limits_{Y_{\rm g}}\La_{\rm g}(y)~dy + \int\limits_{Y_{\rm s}}\La_{\rm s}(y)~dy \Bigg]>\La_0,
\end{eqnarray}
for some positive constant $\La_0\in\Real.$ Furthermore, by multiplying \eqref{Dmatrix} by $Q>0$ and adding the resulting expression to \eqref{Lmatrix} yields a lower bound given by the right hand side of \eqref{Llbound}. Hence, $\D(s,r)$ is bounded from below by
 \begin{eqnarray}
 \label{Dlbound}
\D(s,r) \geq \dfrac{1}{Q}\Bigg[\int\limits_{Y_{\rm g}}\La_{\rm g}(y)~dy + \int\limits_{Y_{\rm s}}\La_{\rm s}(y)~dy-\La_0\Bigg].
\end{eqnarray}
Thus, $\La(s,r)$ and $\D(s,r)$ are uniformly coercive.
\end{proof}
%--------------------------------------------------------------------------------------------------------------------------------------
% PROPERTY OF THE FUNCTION
%--------------------------------------------------------------------------------------------------------------------------------------
\subsection{Properties of $f$}
Since the function $f$\footnote{The primitive of the function $f$ is the Arrhenius kinetics in its standard form; see, e,g, \cite{Ijioma13}.}  defined by \eqref{Arrhenius} is undefined at $x=0,$ we complete its definition such that $f$ is continuous on whole $\Real$ by rewriting \eqref{Arrhenius} as:  
\begin{align}\label{Eqf1}
f(s)=
\begin{dcases}
\dfrac{Au_a}{s^2}\exp\bigg(-\dfrac{u_a}{s}\bigg),&\quad\mbox{$s > 0$}\\
0,&\quad\mbox{$s \leq 0.$}
\end{dcases}
\end{align}  
In \eqref{Eqf1}, $A>0$ is the nondimensional pre-exponential factor and $u_a \gg 0$ is the nondimensional activation temperature. The continuity of \eqref{Eqf1} can be seen by applying the $L^\prime$H\^opital's rule to the right hand limit of $f$ as $s\rightarrow 0$, noting that the left hand side limit is zero. Furthermore, the condition on $u_a$ is motivated physically since the activation energy of solid fuels is usually large; see, e.g. \cite{KASHIWAGI92}. Thus, \eqref{Eqf1}, for appropriate choices of $u_a,$ has at most a linear growth in $s$, i.e.
\begin{align}\label{Eqf2}
f(s) \leq As.
\end{align}
From \eqref{Eqf2}, it can easily be shown that $f$ is globally Lipschitz continuous in $s$. Alternatively, the derivative of $f$ defined by
\begin{align}\label{Eqf3}
f^\prime(s)=
\begin{dcases}
-\dfrac{2Au_a}{s^3}\exp\bigg(-\dfrac{u_a}{s}\bigg) + \dfrac{Au_a^2}{s^4}\exp\bigg(-\dfrac{u_a}{s}\bigg),&\quad\mbox{$s > 0$}\\
0,&\quad\mbox{$s \leq 0,$}
\end{dcases}
\end{align}
is continuously differentiable and bounded since $$\lim_{s^-\rightarrow 0} f^\prime(s)=\lim_{s^+\rightarrow 0}  f^\prime(s) =0,\mbox{ and there exists a constant $C$ such that}$$ \mbox{$|f^\prime(s)| \leq C$ for all $s\in \Real$}. Hence, $f$ is a Lipschitz function. It is worth mentioning that since \eqref{Eqf3} is of the same form as \eqref{Eqf1}, it holds also that $f^\prime(s) \leq As, \mbox{ for all $s\in \Real$.}$
%%%%%%%%%%%%%%%%%%%%%%%%%%%%%%%%%%%%%%%%%%%%%%%%%%
\begin{lemma}\label{boundedTensor}
Assume $(H3)$. Then, for all $s, r\in \Real$ there exists a constant C such that
\begin{align}\label{boundedmatrix}
&|\La_s(s,r)| + |\La_r(s,r)| \leq C,\\
&|\D_s(s,r)| + |\D_r(s,r)| \leq C.
\end{align}
\end{lemma}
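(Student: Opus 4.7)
The plan is to compute $\partial_s$ and $\partial_r$ of the formulas \eqref{Lmatrix} and \eqref{Dmatrix} and bound each resulting term uniformly in $(s,r)\in\Real^2$. The dependence of $\La$ and $\D$ on $(s,r)$ is twofold: explicit, through $f(s)$, $s^2$ and $r$ in the interface integrals, and implicit, through the cell solutions $\chi=\chi(y;s,r)$ and $\omega=\omega(y;s,r)$, which depend parametrically on $(s,r)$ via the coupling $Qf(u)\cH(u,v,\chi_j,\omega_j)$ on $\Gamma$ in \eqref{cellproblem}.

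First, I would dispose of the explicit dependence. From the discussion of $f$ culminating in \eqref{Eqf1}--\eqref{Eqf3}, both $f$ and $f'$ are continuous and uniformly bounded on $\Real$, and the mixed expressions $f(s)s$, $f(s)s^2$, $f'(s)s$, $f'(s)s^2$ that arise when one differentiates the integrands of \eqref{Lmatrix} and \eqref{Dmatrix} are likewise bounded, thanks to the Arrhenius exponential decay as $s\to 0^+$ and the $s^{-2}$ prefactor for large $s$. Using these bounds together with the $H^1_\#$-a priori estimates for $\chi$ and $\omega$ furnished by Lemma \ref{UCell}, I would bound the surface integrals on $\Gamma$ via the trace inequality, controlling the explicit components of $\partial_s\La_{ij}$, $\partial_r\La_{ij}$, $\partial_s\D_{ij}$, and $\partial_r\D_{ij}$ by a universal constant.

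Next, I would treat the implicit dependence. Formally differentiating the cell problem \eqref{cellproblem} with respect to $s$ (resp.\ $r$) yields a linear elliptic system for $(\partial_s\chi,\partial_s\omega)$ with the same principal part as \eqref{cellproblem} and right-hand side equal to $\partial_s\bigl(Qf(s)\cH(s,r,\chi_j,\omega_j)\bigr)$. By $(H3)$--$(H4)$, the Lipschitz continuity of $f$, $f'$ and $\cH$ and the a priori bounds on $\chi,\omega$, this source is bounded in the appropriate dual space uniformly in $(s,r)$. Applying the Lax--Milgram argument of Lemma \ref{UCell} to the linearised cell problem on the quotient space $[H^1_\#(Y)\times H^1_\#(Y_{\rm g})]/\Real(C,C)$ then gives the uniform bound $\|\Nay\partial_s\chi\|_{L^2(Y)^d}+\|\Nay\partial_s\omega\|_{L^2(Y_{\rm g})^d}\leq C$, and analogously for $\partial_r$. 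A routine implicit-function-theorem argument, or a difference-quotient limit in the weak formulation \eqref{varfCell}, rigorously justifies the differentiability.

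Finally, I would substitute these pointwise and $H^1_\#$ bounds into the chain-rule expansions of $\partial_s\La_{ij}$, $\partial_r\La_{ij}$, $\partial_s\D_{ij}$, $\partial_r\D_{ij}$, applying Cauchy--Schwarz in the volume integrals over $Y_{\rm g}$ and $Y_{\rm s}$ and the trace inequality in the surface integrals over $\Gamma$. The main obstacle is the uniformity of the cell-derivative bounds in $(s,r)\in\Real^2$: although $s,r$ range over the unbounded real line, one must exploit the decay of $f(s)$ and $f'(s)$ built into the Arrhenius structure \eqref{Eqf1}--\eqref{Eqf3} to control the polynomially growing factors $s^2$, $r$ in the interface integrands; once that compensation is in place, the remaining estimates are standard, and one recovers \eqref{boundedmatrix} with a single constant $C$ depending only on $Q$, $u_{\rm a}$, $A$, the geometry of $Y$, and the coefficients $\La_{\rm g},\La_{\rm s},D$.
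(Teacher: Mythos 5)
Your overall strategy matches the paper's: isolate the $(s,r)$-dependence in the interface integrals of \eqref{Lmatrix}--\eqref{Dmatrix}, use the boundedness of $f$ and $f'$ from \eqref{Eqf1}--\eqref{Eqf3} (and the fact that $f(s)s^2$, $f'(s)s^2$ stay bounded thanks to the Arrhenius decay), and control the surface integrals of $\chi_i\chi_j$ and $\omega_i\chi_j$ over $\Gamma$ by the interpolation trace inequality together with Lemma \ref{UCell}. Where you genuinely depart from the paper is in the implicit dependence: the paper writes $\La^S_{ij}=S_1\,r f(s)+S_2\exp(-u_{\rm a}/s)$ and differentiates only the explicit factors, treating $S_1,S_2$ as fixed constants, even though the cell solutions $(\chi,\omega)$ are parametrized by $(s,r)$ through the interface term $Qf(u)\cH(u,v,\chi_j,\omega_j)$ in \eqref{cellproblem}. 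Your plan to differentiate the cell problem with respect to $s$ and $r$ and to bound the resulting linearized system by Lax--Milgram addresses a dependence the printed proof silently ignores, and is the right idea if one wants the chain rule applied honestly.

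There is, however, a genuine gap that your proposed mechanism does not close (and which the paper's proof shares). The derivative $\partial_s\La^S_{ij}$ contains the term $S_1\,r\,f'(s)$, and the linearized cell problem for $(\partial_r\chi,\partial_r\omega)$ carries the interface coefficient $r f(s)$ in its bilinear form; in both places the factor $r$ enters linearly and \emph{independently} of $s$. The Arrhenius decay of $f$ and $f'$ is decay in $s$ only: for any fixed $s>0$ with $f'(s)\neq 0$, the quantity $|r|\,|f'(s)|$ is unbounded as $|r|\to\infty$, so no compensation from \eqref{Eqf1}--\eqref{Eqf3} can yield a single constant $C$ valid for all $(s,r)\in\Real^2$. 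The same unboundedness threatens the uniform (in $(s,r)$) coercivity, and hence the uniform a priori bound, of your linearized cell problem. To make the statement hold one must either restrict $(s,r)$ to a bounded set --- e.g. invoke $L^\infty$ bounds on $(u,v)$ coming from $(H5)$--$(H6)$ and a maximum principle, which is presumably what is tacitly intended --- or truncate the $r$-dependence of $\cH$. As written, your concluding claim that ``once that compensation is in place, the remaining estimates are standard'' asserts, for the $r$ factor, a compensation that does not exist.
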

\begin{proof}
From formulas \eqref{Lmatrix} and \eqref{Dmatrix} of the tensors, we only need to show that the derivatives with respect to $s$ and $r$ of the surface integrals:
\begin{align}\label{Eqf4}
\La^S_{ij} = Qf(s) \int\limits_{\Gamma}\bigg[r\chi_i\chi_j + \dfrac{s^2}{2u_a}(\omega_j\chi_i + \omega_i\chi_j)\bigg]d\sigma,\\
\D^S_{ij} = -f(s) \int\limits_{\Gamma}\bigg[\dfrac{s^2}{u_a}\chi_i\chi_j + \dfrac{r}{2}(\omega_j\chi_i + \omega_i\chi_j)\bigg]d\sigma
\end{align}
are bounded, where $f$ is given by \eqref{Eqf1}. Rewriting \eqref{Eqf4} for $\La^S_{ij}$ leads to
\begin{align}\label{Eqf5}
\La^S_{ij}  = S_1rf(s) + S_2\exp\bigg(\dfrac{-u_a}{s}\bigg),
\end{align}
where 
\begin{eqnarray}\label{Eqf5a}
S_1=Q\int\limits_{\Gamma}\chi_i\chi_j~d\sigma,\quad S_2=\dfrac{Q}{2}\int\limits_{\Gamma}(\omega_j\chi_i + \omega_i\chi_j)~d\sigma.
\end{eqnarray}
The integral coefficients \eqref{Eqf5a} are bounded since by the interpolation trace inequality \cite{LadyzhenskayaSolonnikovUralceva:1968a}, we get
\begin{align}
S_1 = \int\limits_{\Gamma}\chi_i\chi_j~d\sigma &\leq C \int\limits_{\Gamma}(|\chi_i|^2 + |\chi_j|^2)~d\sigma\\\nonumber
& \leq C\Big(\|\chi_i\|_{L^2(Y)}\|\chi_i\|_{H_\#^1(Y)} + \|\chi_j\|_{L^2(Y)}\|\chi_j\|_{H_\#^1(Y)}\Big)\\\nonumber
& \leq C\Big(\|\chi_i\|^2_{H_\#^1(Y)} + \|\chi_j\|^2_{H_\#^1(Y)}\Big).
\end{align}
Then, by Lemma \ref{UCell}, there exists a bounded solution $\chi \in H_\#^1(Y)$, unique up to an addition of a constant $C\in \Real.$ It is easy to see that a similar argument can be arrived at for $S_2$. Now, differentiating \eqref{Eqf5} with respect to $s$ yields
\begin{align}\label{Eqf6}
|\La^S_{ij,s}| \leq |S_1||r||f^\prime(s)| + |S_2| |f(s)|.
\end{align}
Using $(H3)$ (property \eqref{Eqf2} of $f$ and $f^\prime$) and the fact that $|S_1|$ and $|S_2|$ are bounded integrals since the functions, $\chi$ and $\omega$, are bounded, then the Lipschitz criteria follows for any $s_1,s_2\in \Real$ and $r\in \Real$, it holds that
\begin{eqnarray}\label{Eqf7}
|\La^S_{ij,s}| \leq C|s_1 - s_2|\leq C(|s_1| + |s_2|).
\end{eqnarray}
Similarly, taking the derivative of \eqref{Eqf5} with respect to $r$ leads to $S_1f(s).$ Since $f$ and $S_1$ are bounded from above, it follows that
\begin{eqnarray}\label{Eqf8}
|\La^S_{ij,r}| \leq C(|r_1| + |r_2|), \quad\mbox{for all $r_1,r_2\in\Real.$}
\end{eqnarray}
By summing \eqref{Eqf7} and \eqref{Eqf8}, we arrive at $\eqref{boundedmatrix}_1$. Repeating the same steps as above, it is easily seen that
\begin{align}\label{Eqf9}
|\D^S_{ij,s}| +|\D^S_{ij,r}| \leq C,\quad\mbox{for all $s,r\in \Real.$}\\\nonumber
\end{align}
\end{proof}
%%%%%%%%%%%%%%%%%%%%%%%%%%%%%%%%%%%%%%%%%%%%%%%%%%%%%%%%%
The uniqueness of the quasilinear parabolic system of equations is shown in the following proposition.
\begin{proposition}
Assume $(H1)$ and $(H3)$ hold. Then, there exists a unique weak solution to \eqref{heatmassdiffusion}.
\end{proposition}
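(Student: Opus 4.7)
The plan is to prove existence by a Faedo-Galerkin approximation scheme and uniqueness by an energy estimate on the difference of two candidate solutions; the existence half rests on the uniform bounds and compactness later worked out in Sections~\ref{galerkinapprox}--\ref{convergence}, while the uniqueness half uses the global Lipschitz continuity of $\La$ and $\D$ from Lemma~\ref{boundedTensor}.

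For existence, I would fix a countable basis $\{w_k\}_{k\ge 1}$ of $H^1_D(\Om)$, reduce to homogeneous Dirichlet data by subtracting suitable lifts of $u_D, v_D$ (admissible by (H6)), and define Galerkin approximates $u_N(t,x)=\sum_{k\le N}\alpha_k(t)w_k(x)$, $v_N(t,x)=\sum_{k\le N}\beta_k(t)w_k(x)$ satisfying the projection of \eqref{varfHomo}. For each pair $(u_N,v_N)$, Lemma~\ref{UCell} delivers the cell solution $(\chi^N,\omega^N)$, so $\La(u_N,v_N)$ and $\D(u_N,v_N)$ are well defined and Lipschitz in their arguments by Lemma~\ref{boundedTensor}; the projected system is thus a locally Lipschitz ODE in $(\alpha,\beta)$, to which Picard-Lindel\"of applies. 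Testing with $u_N$ and $v_N$ and invoking the uniform coercivity of Lemma~\ref{coercive} together with positivity of $c,\theta$ from (H2) would give the a priori bound
\begin{equation*}
\sup_{t\in[0,T]}\bigl(\|u_N\|_{L^2(\Om)}^2 + \|v_N\|_{L^2(\Om)}^2\bigr) + \int_0^T \bigl(\|\Na u_N\|_{L^2}^2 + \|\Na v_N\|_{L^2}^2\bigr)\,dt \le C,
\end{equation*}
which globalizes the ODE and controls $(u_N,v_N)$ uniformly in $L^\infty(0,T;L^2)\cap L^2(0,T;H^1_D)$; a companion bound on $(\p_t u_N,\p_t v_N)$ in $L^2(0,T;V')$ follows from (H1). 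Aubin-Lions compactness then extracts a subsequence converging strongly in $L^2(\Om^T)$ and weakly in $L^2(0,T;H^1_D)$, and the continuity of $\La,\D$ combined with dominated convergence carries the nonlinear fluxes $\La(u_N,v_N)\Na u_N$ through the limit, producing a weak solution in the sense of Definition~\ref{Weak}.

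For uniqueness, suppose $(u_i,v_i)$, $i=1,2$, are two solutions sharing the same data; set $\bar u=u_1-u_2$, $\bar v=v_1-v_2$, subtract the two instances of \eqref{varfHomo}, and test with $\varphi=\bar u$, $\vartheta=\bar v$. Rearrangement would yield
\begin{align*}
&\tfrac{1}{2}\tfrac{d}{dt}\bigl(c\|\bar u\|_{L^2}^2 + \theta\|\bar v\|_{L^2}^2\bigr) + \int_\Om \La(u_1,v_1)|\Na\bar u|^2 + \D(u_1,v_1)|\Na\bar v|^2\,dx \\
&\quad = -\int_\Om \bigl[\La(u_1,v_1)-\La(u_2,v_2)\bigr]\Na u_2\cdot\Na\bar u\,dx - \int_\Om \bigl[\D(u_1,v_1)-\D(u_2,v_2)\bigr]\Na v_2\cdot\Na\bar v\,dx.
\end{align*}
Lemma~\ref{boundedTensor} bounds each coefficient difference pointwise by $C(|\bar u|+|\bar v|)$; after Cauchy-Schwarz, Young's inequality, and an $L^4$ Ladyzhenskaya-type interpolation (valid for $d\le 3$) that trades $\Na u_2,\Na v_2$ against a higher-regularity norm of a reference solution, the right-hand side can be absorbed into the coercive terms on the left, leaving $\tfrac{d}{dt}(\|\bar u\|^2 + \|\bar v\|^2)\le g(t)(\|\bar u\|^2 + \|\bar v\|^2)$ with $g\in L^1(0,T)$. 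Gronwall's lemma with null initial data then forces $\bar u\equiv\bar v\equiv 0$. The main obstacle is precisely this absorption step: since $(|\bar u|+|\bar v|)$ multiplies $\Na u_2,\Na v_2$ which are a priori only in $L^2(0,T;L^2)$, closing the estimate requires either an $L^\infty$ bound on the solutions (attainable via (H5)-(H6) and a maximum-principle argument) or additional parabolic regularity, and is the single crux on which the uniqueness half of the proof leans.
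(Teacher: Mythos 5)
Your proposal follows the same overall strategy as the paper: existence by Faedo--Galerkin (which the paper in fact defers entirely to the machinery of Sections~\ref{galerkinapprox}--\ref{convergence}; the proposition's own proof only treats uniqueness) and uniqueness by testing the difference of two solutions and applying Gronwall, with the Lipschitz bounds of Lemma~\ref{boundedTensor} controlling the coefficient differences. Within that shared strategy, your uniqueness computation is the structurally correct one and the paper's is not: subtracting the two instances of \eqref{varfHomo} produces, as you write, a coercive term $\int_\Om\La(u_1,v_1)|\Na\bar u|^2\,dx$ plus a cross term $\int_\Om[\La(u_1,v_1)-\La(u_2,v_2)]\Na u_2\cdot\Na\bar u\,dx$, whereas the paper's identity \eqref{Uniq1} records the diffusion contribution as $\int_\Om(\La(u_2,v)-\La(u_1,v))|\Na(u_2-u_1)|^2\,dx$, which both misplaces the gradient factor and discards the coercive term needed for any absorption. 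More importantly, you correctly isolate the crux that the paper passes over: after the pointwise bound $|\La(u_1,v_1)-\La(u_2,v_2)|\le C(|\bar u|+|\bar v|)$, the problematic term is cubic ($|\bar u|\,|\Na u_2|\,|\Na\bar u|$ in your version, $|\bar u|\,|\Na\bar u|^2$ in the paper's), and the paper's step from the second to the third line of \eqref{Uniq3} bounds this cubic quantity by quadratic $L^2$-norms with no justification; closing the estimate genuinely requires an $L^\infty$ bound on the solutions (or on $\Na u_2,\Na v_2$ in a suitable mixed norm) or additional parabolic regularity, exactly as you say. So your proposal is not a different route but a more careful rendering of the paper's own route, and the gap you flag as ``the single crux'' is a real gap in the published argument, not merely in your sketch.
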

\begin{proof}
Let $u_1, u_2 ,v_1,v_2$ be arbitrary weak solutions of \eqref{heatmassdiffusion}. We choose $(\varphi, \vartheta)=(u_2-u_1, v_2-v_1)\in H^1_D(\Om)\times H^1_D(\Om)$ as test function in the variational formulation \eqref{varfHomo}. We obtain 
\begin{align}\label{Uniq1}
&\dfrac{1}{2}\dfrac{d}{dt}\int\limits_{\Om}c|u_2-u_1|^2~dx + \dfrac{1}{2}\dfrac{d}{dt}\int\limits_{\Om}\theta |v_2-v_1|^2~dx + \int\limits_{\Om}\Big(\La(u_2,v)-\La(u_1,v)\Big)|\Na(u_2-u_1)|^2~dx \nonumber\\
&+\int\limits_{\Om}\Big(\D(u,v_2)-\D(u,v_1)\Big)|\Na(v_2-v_1)|^2~dx =0.
\end{align} 
In \eqref{Uniq1}, we require to show that $\La(s,r)$ and $\D(s,r)$) are bounded. We show boundedness for $\La$, while $\D$ follows a similar argument. By the fundamental theorem of calculus and Lemma \ref{boundedTensor}, it is easy to see that
\begin{align}\label{Uniq2}
|\La(s_2,r)-\La(s_1, r)| &= \int\limits_{s_1}^{s_2}|\La_s(s,r)|~ds \leq C|s_2-s_1|,\quad\mbox{$r\in\Real$}.
\end{align} 
Substituting \eqref{Uniq2} in the third and last integral of \eqref{Uniq1}, we get
\begin{align}\label{Uniq3}
&\int\limits_{\Om}\Big(\La(u_2,v)-\La(u_1,v)\Big)|\Na(u_2-u_1)|^2~dx  + \int\limits_{\Om}\Big(\D(u, v_2)-\D(u,v_1)\Big)|\Na(v_2-v_1)|^2~dx \nonumber\\
& \leq C\int\limits_{\Om}|u_2-u_1||\Na(u_2-u_1)|^2~dx + C\int\limits_{\Om}|v_2-v_1|\Na(v_2-v_1)|^2~dx \nonumber\\
& \leq \|u_2-u_1\|^2_{L^2(\Om)} + \|v_2-v_1\|^2_{L^2(\Om)} + \|\Na (u_2-u_1)\|^2_{L^2(\Om)} +  \|\Na (v_2-v_1)\|^2_{L^2(\Om)}.
\end{align}
By substituting \eqref{Uniq3} in \eqref{Uniq1}, integrating with respect to time and applying the Gronwall's inequality, we deduce the desired result.
\end{proof}
%%%%%%%%%%%%%%%%%%%%%%%%%%%%%%%%%%%%%%%%%%%%%%%%%%
% MAIN RESULTS
%%%%%%%%%%%%%%%%%%%%%%%%%%%%%%%%%%%%%%%%%%%%%%%%%%
\subsection{Main result}
\noindent The main result of this paper is summarized in the following theorem
\begin{theorem}
Let the assumptions $(H1)$-$(H6)$ be satisfied. Assume further that the projection operators $P^N_x, P^M_y$ defined in \eqref{proj1} and \eqref{proj3} are stable with respect to the $L^2$-norm and $H^2$-norm. Let $(u^N_0,v^N_0,\chi^{N,M},\omega^{N,M})$ be the finite-dimensional approximations defined in \eqref{uN}-\eqref{wN}. Then, for $N,M\rightarrow \infty$, the sequence $(u^N_0 + u_D,v^N_0+v_D,\chi^{N,M},\omega^{N,M})$ converges to the unique weak solution $(u,v,\chi,\omega)$ of problem \eqref{heatmassdiffusion}-\eqref{cellproblem}.
\end{theorem}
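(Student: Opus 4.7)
The strategy is the standard Galerkin convergence scheme: derive $N,M$-uniform estimates, extract weakly/strongly convergent subsequences, pass to the limit in the variational identities, and invoke the uniqueness proposition to conclude convergence of the whole sequence. I would first shift by the boundary data and write the shifted Galerkin unknowns $u^N_0 = u^N - P^N_x u_D$, $v^N_0 = v^N - P^N_x v_D$ as elements of $H^1_D(\Om)$, so that the stability of $P^N_x$ in $L^2$ and $H^2$ transfers the regularity assumption $(H6)$ onto the approximating boundary and initial data.

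The first block of work is the a priori estimates. Taking the natural energy test $(\varphi,\vartheta) = (u^N_0, v^N_0)$ in the discrete form of \eqref{varfHomo} and using the coercivity established in Lemma \ref{coercive} together with Gronwall's inequality yields
\begin{align*}
\|u^N_0\|_{L^\infty(0,T;L^2(\Om))} + \|u^N_0\|_{L^2(0,T;H^1_D(\Om))} + \|v^N_0\|_{L^\infty(0,T;L^2(\Om))} + \|v^N_0\|_{L^2(0,T;H^1_D(\Om))} \leq C,
\end{align*}
with $C$ independent of $N,M$. Comparison in \eqref{varfHomo} combined with the $L^\infty$-bound on $\La,\D$ from $(H1)$ gives the corresponding bound on $\p_t u^N_0,\p_t v^N_0$ in $L^2(0,T;V)$. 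For the cell approximations, testing the discrete analogue of \eqref{varfCell} against $(\phi,\psi)=(\chi^{N,M},Q^{-1}\omega^{N,M})$ and exploiting the coercive framework of Lemma \ref{UCell}, together with the linear growth of $f$ from $(H3)$ and the Lipschitz continuity of $\cH$ from $(H4)$ (absorbing the boundary term via the interpolation trace inequality as in the proof of Lemma \ref{boundedTensor}), yields
\begin{align*}
\|\chi^{N,M}\|_{L^2(\Om;H^1_\#(Y)^d)} + \|\omega^{N,M}\|_{L^2(\Om;H^1_\#(Y_{\rm g})^d)} \leq C.
\end{align*}

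With these estimates I would extract subsequences so that $u^N_0 \rightharpoonup u_0$ and $v^N_0 \rightharpoonup v_0$ weakly in $L^2(0,T;H^1_D(\Om))$ with $\p_t u^N_0, \p_t v^N_0$ weakly convergent in $L^2(0,T;V)$. The Aubin--Lions lemma upgrades these to strong convergence in $L^2(0,T;L^2(\Om))$, and a further subsequence converges a.e.\ in $\Om^T$. Simultaneously $(\chi^{N,M},\omega^{N,M}) \rightharpoonup (\chi,\omega)$ weakly in $L^2(\Om;H^1_\#(Y)^d) \times L^2(\Om;H^1_\#(Y_{\rm g})^d)$ by Banach--Alaoglu. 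Passage to the limit in the linear parts of \eqref{varfHomo}--\eqref{varfCell} is immediate once the density of $\bigcup_N \mathrm{span}\{$basis$\}$ is combined with the $L^2$-stability of $P^N_x, P^M_y$ to approximate arbitrary test functions by their projections. The nonlinear diffusion products $\La(u^N,v^N)\Na u^N$ and $\D(u^N,v^N)\Na v^N$ are handled by the continuity of $\La,\D$ from $(H1)$: a.e.\ convergence of $(u^N,v^N)$ together with the uniform $L^\infty$-bound on the tensors gives $\La(u^N,v^N) \to \La(u,v)$ strongly in every $L^p(\Om^T)$, $p<\infty$, by dominated convergence, and this combines with the weak $L^2$-convergence of $\Na u^N$ to identify the limit of the product.

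The main obstacle is the passage to the limit in the coupled nonlinear surface term $Qf(u)\cH(u,v,\chi_j,\omega_j)$ on $\Om\times\Gamma$, because the macroscopic variables $u,v$ enter the microscopic problem as parameters while $\chi,\omega$ converge only weakly in $L^2(\Om;H^1_\#)$. To close this, I would use: (i) the Lipschitz continuity of $f$ and of $\cH$ from $(H3)$--$(H4)$, so that strong $L^2(\Om^T)$-convergence of $u^N, v^N$ propagates to strong $L^2$-convergence of $f(u^N)$; (ii) the interpolation trace inequality employed in Lemma \ref{boundedTensor} to control $\chi_j,\omega_j$ on $\Gamma$ by their $H^1_\#$-norms; and (iii) the fact that $x$ enters the cell problems only parametrically, so no compactness of $\Na_x\chi^{N,M}$ is needed---weak convergence of $\chi^{N,M},\omega^{N,M}$ in $L^2(\Om;H^1_\#)$ paired with strong convergence of $f(u^N)\cH(u^N,v^N,\cdot,\cdot)$ in the complementary factor suffices, after noting that the surface integral is bilinear in the two cell families via the explicit form of $\cH$ in \eqref{Arrhenius}. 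Once these limits are identified, the limit quadruple satisfies Definition \ref{Weak}, attains the correct initial data by the $H^2$-stability of $P^N_x$ applied to $u_I,v_I \in H^2(\Om)$ under $(H5)$, and coincides with the unique weak solution guaranteed by the preceding proposition; by uniqueness, the whole sequence converges.
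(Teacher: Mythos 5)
Your overall architecture --- uniform estimates, Aubin--Lions compactness for the macroscopic unknowns, weak compactness for the cell unknowns, limit passage in the variational identities, and uniqueness to upgrade subsequential to full convergence --- is the same as the paper's (Theorems \ref{thm:almostcompactnesstheorem}--\ref{thm:convergenceresults2}). Two of your deviations are harmless: you obtain the time-derivative bound by duality/comparison rather than by testing with $(\p_t u_0^N,\p_t v_0^N)$ as the paper does (the paper's route, which also needs the $H^2$-stability of the projections to estimate $\p_t u_0^N(0)$, yields the stronger bounds \eqref{uniformestimate1}--\eqref{uniformestimate2}, but your weaker dual bound still feeds Aubin--Lions), and you test the cell system with $(\chi^{N,M},Q^{-1}\omega^{N,M})$ instead of $(\chi^{N,M},\omega^{N,M})$.

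The genuine gap is in the limit passage for the discretized effective tensors. You treat $\La$ and $\D$ as fixed continuous functions of $(u,v)$ and invoke $(H1)$ plus dominated convergence; but in the scheme \eqref{Disc1-lambda}--\eqref{Disc2-D} the tensors are assembled from the \emph{discrete} cell solutions and contain terms quadratic in $\chi^{N,M},\omega^{N,M}$: the volume integrals $\int_{Y_{\rm g}}\La_{\rm g}(e_i+\Na_y\chi^{N,M}_i)\cdot(e_j+\Na_y\chi^{N,M}_j)\,dy$ and the surface integrals $\int_\Gamma \chi^{N,M}_i\chi^{N,M}_j\,d\sigma$, $\int_\Gamma \omega^{N,M}_i\chi^{N,M}_j\,d\sigma$, $\int_\Gamma \omega^{N,M}_i\omega^{N,M}_j\,d\sigma$. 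The weak convergence of $(\chi^{N,M},\omega^{N,M})$ in $L^2(\Om;H^1_\#)$, which is all you extract, does not permit passage to the limit in these products; your observation that the surface term is ``bilinear in the two cell families'' is correct only for the interface term of the cell equation \eqref{Disc2}, which is \emph{linear} in $(\chi_j,\omega_j)$, not for the tensors themselves. This is precisely why the paper proves the additional strong convergence \eqref{eqn:convergenceresults2-1} of the traces $\chi^{N,M}|_\Gamma$ and $\omega^{N,M}|_\Gamma$ in $L^2(\Om;L^2(\Gamma))$, via the compact embedding $H^1(Y)\hookrightarrow H^\beta(Y)$ for $1/2<\beta<1$ composed with the continuous trace $H^\beta(Y)\to L^2(\Gamma)$. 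You need to add this step (and, strictly speaking, a corresponding strong-convergence or energy/Minty argument for the gradient products over $Y_{\rm g}$ and $Y_{\rm s}$, a point the paper itself dispatches with ``standard arguments''); without it, the identification of the limit of $\La(u_0^N+u_D,v_0^N+v_D)\Na u_0^N$ and $\D(u_0^N+u_D,v_0^N+v_D)\Na v_0^N$ does not close.
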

\section{Global existence of weak solutions}\label{galerkinapprox}
To show global existence of weak solutions to problem \eqref{heatmassdiffusion}--\eqref{Arrhenius}, we use a multiscale Galerkin method to exploit the two-scale nature of the problem, which results to defining finite dimensional approximations for the solutions of \eqref{heatmassdiffusion}--\eqref{Arrhenius}. A key aspect to defining the finite dimensional approximation is the choice of the bases. For this purpose, we take clues from \cite{MunteanRadu:2010}. The basis elements on the domain $\Om\times Y$ are chosen as tensor products of basis elements on the macroscopic domain $\Om$ and on the representative cell $Y$.

The prove of convergence of the finite-dimensional approximations to the weak solution of problem \eqref{heatmassdiffusion}--\eqref{Arrhenius} is determined by the uniform estimates proved in Subsection \ref{subsect:uniformestimatesforthediscretizedproblems}. The convergence step is analogous to standard Galerkin approximation, however, compactness results for the finite-dimensional approximations is required for both the microscopic and macroscopic variables. The main difficulty is in handling the nature of the coupling between macroscopic variables and the cell variables and vice versa; the coupling exhibited in the multiscale scenario is such that the microscopic variables are used in the calculation of the diffusion tensors whereas the macroscopic variables enter the cell problems as parameters to the interface conditions on $\Gamma$.
%%%%%%%%%%%%%%%%%%%%%%%%%%%%%%%%%%%%%%%%%%%%%%%%%%%%%%%%%%%%%%%%%
\subsection{Galerkin approximation and global existence for the discretized problem}
\noindent Let $\{\xi_i\}_{i\in  \mathbb{N}}$ be a basis of $L^2(\Om)$, with $\xi_j\in H^2(\Om) \cap H^1_{D}(\Om)$, forming an orthonormal system ($\rm ONS$) with respect to $L^2(\Om)$-norm. Also, let $\{\zeta_{jk}\}_{j,k\in\mathbb{N}}$ be a basis of $L^2(\Om\times Y),$ with
{\allowdisplaybreaks
\begin{eqnarray}
\zeta_{jk}(x,y)=\xi_j(x)\eta_k(y),
\end{eqnarray}
where $\{\eta_k\}_{k\in \mathbb{N}}$ is a basis of $L^2(Y),$ with $\eta_k\in H^2(Y) \cap H^1_{\#}(Y)$, forming an $\rm ONS$ with respect to $L^2(Y)$-norm. We define the projection operators on finite dimensional subspaces $P^N_x, P^M_y$ associated with the bases $\{\xi_j\}_{j\in \mathbb{N}}$ and $\{\eta_k\}_{k\in  \mathbb{N}}$ respectively. For $(\varphi,\psi)$ of the form
\begin{align}
&\varphi(x) = \sum_{j\in \mathbb{N}}a_j\xi_j(x),\\
&\psi(x,y)=\sum_{j,k\in \mathbb{N}}b_{jk}\xi_j(x)\eta_k(y),
\end{align}
we define
\begin{align}
\label{proj1}
&\Big(P^N_x\varphi\Big)(x) = \sum_{j=1}^{N}a_j\xi_j(x),
\quad \Big(P^N_x\psi\Big)(x,y) = \sum_{j=1}^{N}\sum_{k\in \mathbb{N}}b_{jk}\xi_j(x)\eta_k(y),\\
\label{proj3}
&\Big(P^M_y\psi\Big)(x,y) = \sum_{j\in \mathbb{N}}\sum_{k=1}^M b_{jk}\xi_j(x)\eta_k(y).
\end{align}
The bases $\{\xi_j\}_{j\in \mathbb{N}}$ and $\{\eta_k\}_{k\in  \mathbb{N}}$ are chosen such that the projection operators $P^N_x,P^M_y$ are stable with respect to the $L^2$-norm and $H^2$-norm; i.e. for a given function $\varphi$, the $L^2$-norm and $H^2$-norm of the truncations by the projection operators can be estimated by the corresponding norms of the function. In the next step, we look for finite-dimensional approximations of the functions 
\begin{eqnarray*} u_0 =u-u_D, v_0 =v-v_D, \chi ~\mbox{and}~\omega \end{eqnarray*} 
of the following form:
\begin{align}
\label{uN}
&u_0^N(t,x) = \sum_{j=1}^{N}\U^N_j(t)\xi^{\rm u}_j(x),\\
\label{vN}
&v_0^N(t,x) = \sum_{j=1}^N\V^N_j(t)\xi^{\rm v}_j(x),\\
\label{xN}
&\chi^{N,M}_{\alpha,p}(x,y) = \sum_{j=1}^N\sum_{k=1}^M\X_{jk,p}\zeta_{jk}^{\chi}(x,y),~\mbox{$\alpha=\{g,s\},p=1,\ldots,d,$}\\
\label{wN}
&\omega_p^{N,M}(x,y)=\sum_{j=1}^N\sum_{k=1}^M\W_{jk,p}\zeta^{\omega}_{jk}(x,y),~\mbox{$p=1,\ldots,d,$}
\end{align}
where the coefficients $\U^N_j,\V^N_j,\X_{jk,p},\W_{jk,p}, j=1,\ldots,N, k=1,\ldots,M, p=1,\ldots,d$ are determined by the following relations:
\begin{align}
\label{Disc1}
&\int\limits_{\Om}c\pt u_0^N\varphi dx + \int\limits_{\Om}\La\big(u^N_{0} + u_D,v^N_{0} + v_D\big)\Na u^N_0\Na\varphi dx + \int\limits_{\Om}\theta \pt v^N_0 \vartheta dx \notag \\ 
& + \int\limits_{\Om}\D\big(u^N_{0} + u_D,v^N_{0} + v_D\big)\Na v^N_0\Na \vartheta dx \nonumber \\
&=\int\limits_{\Om}\Big(\Na\cdot\Big(\La\big(u^N_{0} + u_D,v^N_{0} + v_D\big)\Na u_D(t)\Big) - c\pt u_D(t)\Big)\varphi dx \notag \\
& + \int\limits_{\Om}\Big(\Na\cdot\Big(\D\big(u^N_{0} + u_D,v^N_{0} + v_D\big)\Na v_D(t)\Big) - \theta\pt v_D(t)\Big)\vartheta dx,
\end{align}
and 
\begin{align}
\label{Disc2}
&\int\limits_{\Om\times Y_{\rm g}}\La_{\rm g}\big(e_j + \Na_y \chi^{N,M}_{\textrm g,j} \big)\cdot\Na_y\phi dydx + \int\limits_{\Om\times Y_{\rm s}}\La_{\rm s}\big(e_j + \Na_y \chi^{N,M}_{\textrm s,j} \big)\cdot\Na_y\phi \,dydx \notag \\
& + Q\int\limits_{\Om\times Y_{\rm g}}D(y)\big(e_j + \Na_y\omega^{N,M}_j\big)\cdot\Na_y\psi dydx \\
&+Q\int\limits_{\Om}\int\limits_{\Gamma}f(u^N_{0} + u_D)\cH(u^N_0+u_D,v^N_0+v_D,\chi^{N,M}_j,\omega^{N,M}_j)\big(\phi-\psi\big)\, d\sigma dx=0 \notag 
\end{align}
with 
\begin{align}
\label{Disc1-lambda}
&\La_{ij}(u^N_{0} + u_D,v^N_{0} + v_D)  = \int\limits_{Y_{\textrm g}}\La_{\textrm g}(y)\big(e_i + \Na_y\chi^{N,M}_{\textrm{g},i}\big)\!\cdot\!\big(e_j + \Na_y\chi^{N,M}_{\textrm{g},j}\big)dy \\\nonumber
& +\int\limits_{Y_{\textrm s}}\La_{\textrm s}(y)\big(e_i + \Na_y\chi^{N,M}_{\textrm{s},i}\big)\!\cdot\!\big(e_j + \Na_y\chi^{N,M}_{\textrm{s},j}\big)dy\\\notag
&+Qf(u^N_{0} + u_D)\int\limits_{\Gamma}\Big[\Big(v^N_{0} + v_D\Big)\chi^{N,M}_i\chi^{N,M}_j 
+ \dfrac{(u^N_{0} + u_D)^2}{2u_{\textrm a}}\Big(\omega^{N,M}_j\chi^{N,M}_i + \omega^{N,M}_i\chi^{N,M}_j\Big)\Big]d\sigma,\\\notag
&\quad\mbox{for $i,j=1,\ldots,d$}
\end{align}
and
\begin{align}
\label{Disc2-D}
 &\D_{ij}(u^N_{0} + u_D,v^N_{0} + v_D) = \int\limits_{Y_{\textrm g}}D(y)\big(e_i + \Na_y\omega^{N,M}_{i}\big)\!\cdot\!\big(e_j + \Na_y\omega^{N,M}_{j}\big)dy \nonumber \\
&-f(u^N_{0} + u_D)\int\limits_{\Gamma}\Big[\dfrac{(u^N_{0} + u_D)^2}{u_{\textrm{a}}}\omega^{N,M}_i\omega^{N,M}_j
 + \dfrac{(v^N_{0} + v_D)}{2}\Big(\omega^{N,M}_i\chi^{N,M}_j + \omega^{N,M}_j\chi^{N,M}_i\Big)\Big]d\sigma,\\\notag
&\quad\mbox{for $i,j=1,\ldots,d$}
\end{align}
for all ($\varphi,\vartheta,\phi,\psi$) of the form
\begin{align*}
&\varphi(x) = \sum_{j=1}^{N}a_j\xi^{\rm u}_j(x),~\vartheta(x) =  \sum_{j=1}^{N}a_j\xi^{\rm v}_j(x),\\
&\phi(x,y)=\sum_{j=1}^N\sum_{k=1}^{M}b_{jk}\zeta^{\chi}_{jk}(x,y),~\psi(x,y)=\sum_{j=1}^N\sum_{k=1}^{M}b_{jk}\zeta^{\omega}_{jk}(x,y),
\end{align*}
and 
\begin{align}\label{DiscInit}
&\U^N_j(0) := \int\limits_{\Om}\big(u_I-u_D(0)\big)\xi_j dx,\nonumber\\
&\V^N_j(0) := \int\limits_{\Om}\big(v_I-v_D(0)\big)\xi_j dx.
\end{align}
In \eqref{Disc1}-\eqref{Disc2}, we take as test functions $\varphi=\xi^{\rm u}_j,\vartheta=\xi^{\rm v}_j, \phi=\zeta^{\chi}_{jk}$ and $\psi=\zeta_{jk}^{\omega},$ for $j=1,\dots,N,k=1,\ldots,M$ and obtain the following system of ordinary differential equations for the coefficients $\U^N=\Big(\U^N_j\Big)_{j=1,\ldots,N}, \V^N=\Big(\V^N_j\Big)_{j=1,\ldots,N}$ and algebraic equations for the coefficients $\X=\Big(\X_{jk,p}\Big)_{j=1,\ldots,N, k=1,\ldots,M}$, and $\W=\Big(\W_{jk,p}\Big)_{j=1,\ldots,N,k=1,\ldots,M}, p=1,\ldots,d:$
\begin{align}
\label{DiscU}
&c\pt \U^N(t) + \sum^{N}_{i,j=1}\cA_{ij}\Big(\U^N(t),\V^N(t)\Big)\U^N_j(t) = F\Big(\U^N(t),\V^N(t)\Big),\\
\label{DiscV}
&\theta\pt \V^N(t) + \sum^{N}_{i,j=1}\cB_{ij}\Big(\U^N(t),\V^N(t)\Big)\V^N_j(t) = G\Big(\U^N(t),\V^N(t)\Big),\\
\label{DiscXW}
&\sum^{N}_{j=1}\sum^{M}_{k=1}\Big(\M^{G}_{jk} + \M^{S}_{jk}\Big)\X_{jk,p}+\sum^{N}_{j=1}\sum^{M}_{k=1}\N_{jk}\W_{jk,p}  +  \widetilde{R}\big(\X,\W\big) = -\big(F^{G} +  F^{S} + \widetilde{G}\big),
\end{align}
where for $i,j,l=1,\ldots,N, k=1,\ldots, M, p=1,\ldots, d,$ we have
\begin{align}
&\cA_{ij}=\int\limits_{\Om}\La\big(u^N_0 +u_D,v^N_0+v_D\big)\Na\xi^{\rm u}_{i}(x)\Na\xi^{\rm u}_j(x)~dx,\\
&\cB_{ij}=\int\limits_{\Om}\D\big(u^N_0 +u_D,v^N_0+v_D\big)\Na\xi^{\rm v}_{i}(x)\Na\xi^{\rm v}_j(x)~dx,\\
&F_j = \int\limits_{\Om}\bigg(\Na\!\cdot\!\Big(\La\big(u^N_{0} + u_D,v^N_{0} + v_D\big)\Na u_D(t)\Big) - c\pt u_D(t)\bigg)~\xi^{\rm u}_j(x)~dx,\\
&G_j =  \int\limits_{\Om}\bigg(\Na\!\cdot\!\Big(\D\big(u^N_{0} + u_D,v^N_{0} + v_D\big)\Na v_D(t)\Big) - \theta\pt v_D(t)\bigg)~\xi^{\rm v}_j(x)~dx,\\
&\Big(\M^{G}_{jk}\Big)_{il} = \int\limits_{\Om\times Y_{\rm g}}\La_{\rm g}(y)\Na_y\zeta^{\chi}_{jk}(x,y)\Na_y\zeta^{\chi}_{il}(x,y)~dydx, \notag \\
& \Big(\M^{S}_{jk}\Big)_{il} = \int\limits_{\Om\times Y_{\rm s}}\La_{\rm s}(y)\Na_y\zeta^{\chi}_{jk}(x,y)\Na_y\zeta^{\chi}_{il}(x,y)~dydx,\\
&\Big(F^{G}_p\Big)_{il}=\int\limits_{\Om\times Y_{\rm g}}\La_{\rm g}(y)~e_p\!\cdot\!\Na_y\zeta^{\omega}_{il}(x,y)dydx,\notag \\ 
&\Big(F^{S}_p\Big)_{il}=\int\limits_{\Om\times Y_{\rm g}}\La_{\rm g}(y)~e_p\!\cdot\!\Na_y\zeta^{\omega}_{il}(x,y)dydx\\
&\Big(\N_{jk}\Big)_{il} = \int\limits_{\Om\times Y_{\rm g}}QD(y)\Na_y\zeta^{\omega}_{jk}(x,y)\Na_y\zeta^{\omega}_{il}(x,y)~dydx,~\notag \\
&\Big(\widetilde{G_{p}}\Big)_{il}=\int\limits_{\Om\times Y_{\rm g}}QD(y)~e_p\!\cdot\!\Na_y\zeta^{\omega}_{il}(x,y)dydx\\
& \widetilde{R}_{jk,p}=Q\int\limits_{\Om}\int\limits_{\Gamma}f(u^N_{0} + u_D)\cH\Big(u^N_0+u_D,v^N_0+v_D,\chi^{N,M}_p,\omega^{N,M}_p\Big)\Big(\zeta_{jk}^{\chi}(x,y)-\zeta^{\omega}_{jk}(x,y)\Big)~d\sigma dx. 
\end{align}
% }
The Cauchy problem \eqref{DiscInit}--\eqref{DiscV} admits a unique solution $(\U^N(t),\V^N(t))$ in $C^1([0,T])^N\times C^1([0,T])^N$ since by Lemma \ref{boundedTensor} and the regularities of the data, the functions $\cA, \cB, F$ and $G$ are globally Lipschitz continuous. The uniqueness of \eqref{DiscXW} follows from standard arguments for showing the wellposedness of discrete elliptic problems using properties ($H2$)-($H4$).
%%%%%%%%%%%%%%%%%%%%%%%%%%%%%%%%%%%%%%%%%%%%%%%%%%%%%%%%%%%
\subsection{Uniform estimates for the discretized problems}\label{subsect:uniformestimatesforthediscretizedproblems}
Here, we prove uniform estimates for the solutions of the finite dimensional problems. These estimates equip us with the necessary tool to pass in \eqref{Disc1}--\eqref{Disc2-D} to the limit $N,M \rightarrow \infty.$
\begin{theorem}\label{thm:almostcompactnesstheorem}
Let $P^N_x,P^N_y$ defined in \eqref{proj1} and \eqref{proj3} be stable with respect to the $L^2$-norm and $H^2$-norm and satisfies the assumptions $(H1)-(H6).$ Then there exists a constant $C>0$ independent of $N$ such that 
  \begin{align}
    & \|u_0^N \|_{L^\infty((0,T),H^1(\Om))} + \| \partial_t u_0^N \|_{L^2((0,T),L^2(\Om))}  \leq C , \label{uniformestimate1}\\
    &  \|v_0^N \|_{L^\infty((0,T),H^1(\Om))} + \| \partial_t v_0^N \|_{L^\infty((0,T),L^2(\Om))} \leq C, \label{uniformestimate2} \\
        & \|\chi^{N,M} \|_{L^2(\Om;H^1(Y))} \leq C,  \quad \| \omega^{N,M}\|_{L^2(\Om;H^1(Y_g))} \leq C, \label{uniformestimate3}   
  \end{align}
\end{theorem}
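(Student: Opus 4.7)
The plan is to derive the three blocks of estimates in turn, following a standard Galerkin energy-method template but with extra care for the $(u,v)$-dependence of the diffusion tensors and for the trace terms arising from the cell problem. The macroscopic estimates come first, because the cell estimate \eqref{uniformestimate3} ultimately relies on $\chi^{N,M},\omega^{N,M}$ being controlled by the reaction data $f(u^N_0+u_D)$ and $\cH(\cdot)$, which inherit bounds from the macroscopic step.

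\textbf{Step 1 (baseline energy estimate).} I would test \eqref{Disc1} with $(\varphi,\vartheta)=(u^N_0,v^N_0)$, which is admissible because both functions lie in the finite-dimensional span of $\{\xi^{\rm u}_j\}_{j=1}^N$ and $\{\xi^{\rm v}_j\}_{j=1}^N$. Lemma \ref{coercive} gives lower bounds $\La\ge\La_0$ and $\D\ge D_0$, while the upper bounds in $(H1)$ and the regularity $u_D,v_D\in L^2(0,T;H^2(\Om))\cap L^\infty$ from $(H6)$ let me absorb the right-hand side via Cauchy--Schwarz and Young. This produces a differential inequality
\[
\tfrac12\tfrac{d}{dt}\bigl(c\|u^N_0\|_{L^2}^2+\theta\|v^N_0\|_{L^2}^2\bigr)+c_0\bigl(\|\Na u^N_0\|_{L^2}^2+\|\Na v^N_0\|_{L^2}^2\bigr)\le C\bigl(1+\|u^N_0\|_{L^2}^2+\|v^N_0\|_{L^2}^2\bigr),
\]
and the $L^2$-stability of $P^N_x$ together with $(H5)$ gives uniformly bounded initial data \eqref{DiscInit}, so Gronwall yields $L^\infty(L^2)\cap L^2(H^1)$ bounds on $u^N_0$ and $v^N_0$.

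\textbf{Step 2 ($L^\infty(H^1)$ and time derivative).} Next I would test \eqref{Disc1} with $(\varphi,\vartheta)=(\pt u^N_0,\pt v^N_0)$. Using the identity
\[
\int_\Om \La\,\Na u^N_0\cdot\Na\pt u^N_0\,dx=\tfrac12\tfrac{d}{dt}\!\int_\Om\La|\Na u^N_0|^2dx-\tfrac12\!\int_\Om (\pt\La)|\Na u^N_0|^2dx,
\]
and the analogous rewriting for $\D$, the only delicate term is $\int (\pt\La)|\Na u^N_0|^2dx$. Applying the chain rule, $\pt\La=\La_s(\pt u^N_0+\pt u_D)+\La_r(\pt v^N_0+\pt v_D)$, and invoking the Lipschitz bounds $|\La_s|+|\La_r|\le C$ from Lemma \ref{boundedTensor}, this reduces to a trilinear quantity that can be controlled via the Sobolev embedding $H^1(\Om)\hookrightarrow L^6(\Om)$ in $d\le 3$ (write $|\Na u^N_0|^2\in L^3$ and pair with $\pt u^N_0\in L^2$ and an intermediate factor). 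After absorbing a fraction of $\|\pt u^N_0\|_{L^2}^2+\|\pt v^N_0\|_{L^2}^2$ into the dominant $c\|\pt u^N_0\|_{L^2}^2+\theta\|\pt v^N_0\|_{L^2}^2$, and using $(H6)$ and the $H^2$-stability of $P^N_x$ for the boundary-lift terms, Gronwall's inequality closes the estimate and produces \eqref{uniformestimate1}--\eqref{uniformestimate2}. This is the step I expect to be the \emph{main obstacle}: obtaining the trilinear absorption requires precisely the bounded derivatives of the tensors proved in Lemma \ref{boundedTensor}, and forces the dimensional restriction $d\le 3$ implicitly through the Sobolev embedding. Without it one would need an independent $L^\infty$ or $W^{1,\infty}$ bound on $u^N_0$, which is not available at this level.

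\textbf{Step 3 (cell estimates).} For \eqref{uniformestimate3}, I would test \eqref{Disc2} with $(\phi,\psi)=(\chi^{N,M}_j,\omega^{N,M}_j)$. The leading bilinear part of the left-hand side is coercive in $\int_\Om(\|\Nay\chi^{N,M}\|_{L^2(Y)}^2+\|\Nay\omega^{N,M}\|_{L^2(Y_{\rm g})}^2)dx$ by $(H2)$. The linear terms $\int\La\,e_j\cdot\Nay\phi$ and $\int QD\,e_j\cdot\Nay\psi$ are handled by Cauchy--Schwarz and Young. The boundary integral on $\Gamma$ is estimated using the interpolation trace inequality $\|\varphi\|_{L^2(\Gamma)}\le C\|\varphi\|_{H^1(Y)}$, the linear-growth bound $f(s)\le As$ from $(H3)$, and the Lipschitz continuity of $\cH$ from $(H4)$; the occurrences of $u^N_0+u_D$ and $v^N_0+v_D$ on $\Gamma$ are harmless as $x$-parameters because Steps 1--2 give us uniform $L^\infty(0,T;H^1(\Om))$ control (hence $L^p$ control for $p<\infty$ when $d\le 3$). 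Finally, by the Poincaré-type inequality in the quotient space $[H^1_\#(Y)\times H^1_\#(Y_{\rm g})]/\Real(C,C)$ used in Lemma \ref{UCell}, I recover the full $L^2(\Om;H^1)$ bound claimed in \eqref{uniformestimate3}.
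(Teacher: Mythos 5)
Your Steps 1 and 3 follow the paper's argument closely: test with $(u_0^N,v_0^N)$ and with $(\chi_j^{N,M},\omega_j^{N,M})$, use coercivity and boundedness of the tensors, the linear growth of $f$ and Lipschitz continuity of $\cH$ together with the interpolation trace inequality on $\Gamma$, and close with Gronwall. (The paper disposes of the linear cell terms by periodicity rather than by Young's inequality, and runs the macroscopic and cell estimates simultaneously rather than sequentially, but these are cosmetic differences; your explicit appeal to the quotient-space Poincar\'e inequality to upgrade the gradient bound to the full $H^1$ bound in \eqref{uniformestimate3} is a point the paper leaves implicit.)

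The genuine problem is Step 2. First, your absorption of $\int_\Om(\pt\La)\,|\Na u_0^N|^2\,dx$ does not close: with only $|\La_s|+|\La_r|\le C$ from Lemma \ref{boundedTensor} the leading contribution is $\|\pt u_0^N\|_{L^2}\,\|\Na u_0^N\|_{L^4}^2$, and placing $|\Na u_0^N|^2$ in $L^3$ (or even in $L^2$) requires $\Na u_0^N\in L^6$ (resp.\ $L^4$), i.e.\ a uniform $H^2$ or $W^{1,4}$ bound on $u_0^N$ that is not available at this stage. The embedding $H^1(\Om)\hookrightarrow L^6(\Om)$ controls $u_0^N$ itself, not its gradient, so the ``main obstacle'' you identify is not actually resolved by the argument you sketch. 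Second, testing with $\pt u_0^N$ directly can only yield $\pt v_0^N\in L^2((0,T);L^2(\Om))$, whereas \eqref{uniformestimate2} asserts an $L^\infty((0,T);L^2(\Om))$ bound. The paper instead differentiates the Galerkin system in time and then tests with $(\pt u_0^N,\pt v_0^N)$, producing $\tfrac{d}{dt}\|\pt u_0^N\|_{L^2}^2+\tfrac{d}{dt}\|\pt v_0^N\|_{L^2}^2$ on the left; this forces an estimate of $\|\pt u_0^N(0)\|_{L^2}+\|\pt v_0^N(0)\|_{L^2}$, which the paper obtains by evaluating the scheme at $t=0$, testing with $(\pt u_0^N(0),\pt v_0^N(0))$, and using $(H5)$ together with the $H^2$-stability of the projection operators. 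This initial-time-derivative estimate --- the very reason the $H^2$-stability hypothesis appears in the statement of the theorem --- is entirely absent from your proposal, so as written Step 2 neither closes its own trilinear term nor delivers the $L^\infty$-in-time bound claimed in \eqref{uniformestimate2}.
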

%------
\begin{proof} We follow similar line of argument as in \cite{MunteanRadu:2010}. We take as test functions $(\varphi,\vartheta, \phi, \psi) = (u_0^N,v_0^N, \chi_{\alpha,j}^{N,M}, \omega_j^{N,M})$ with $\alpha = \{g,s \}$ in \eqref{Disc1}-\eqref{Disc2-D}. By applying the boundedness property of the diffusion tensors, we obtain
 \begin{align*}
  & \frac{c}{2}\frac{d}{dt}\|u_0^N(t)\|_{L^2(\Om)}^2 + \frac{\theta}{2}\frac{d}{dt}\|v_0^N(t)\|_{L^2(\Om)}^2 + \La_1 \| \nabla u_0^N \|_{L^2(\Om)}^2 
  + D_1 \| \nabla v_0^N \|_{L^2(\Om)}^2 \\
  &\leq \int\limits_{\Om} \bigg(\nabla\cdot\bigg(\La(u_0^N + u_D,v_0^N + v_D) \nabla u_D(t)\bigg) - c \partial_t u_D \bigg) u_0^N\,dx, &\quad\quad (\rm I) \\
  &+ \int\limits_{\Om}\bigg(\nabla\cdot\bigg(\D(u_0^N + u_D,v_0^N + v_D(t)) \nabla v_D(t) \bigg) - c \partial_t v_D(t)\bigg) v_0^N \bigg) dx,&\quad\quad (\rm II)
  \end{align*}
\begin{align*}
 & \|\nabla_y \chi_{g,j}^{N,M}\|^2_{L^2(\Om\times Y_{\rm g})} + \|\nabla_y \chi_{s,j}^{N,M}\|^2_{L^2(\Om\times Y_{\rm s})} + \|\Na_y \omega_j^{N,M}\|^2_{L^2(\Om\times Y_{\rm g})}\\
 & \leq -\int\limits_{\Om \times Y_{\rm g}} \La_{\rm g}e_j\nabla_y \chi_{g,j}^{N,M}~d\sigma dx -\int\limits_{\Om \times Y_{\rm s}} \La_{\rm s}e_j\nabla_y \chi_{s,j}^{N,M}~d\sigma  -Q\!\int\limits_{\Om \times Y_{\rm g}}De_j\nabla_y \omega_{g,j}^{N,M}~d\sigma\quad (\rm III)\\
 &+ Q\!\int\limits_\Om \int\limits_\Gamma f(u_0^N+u_D)\mathcal{H}(u_0^N+u_D,v_0^N+v_D,\chi_j^{N,M},\omega_j^{N,M})(\omega_j^{N,M} - \chi_j^{N,M} )\,d\sigma\quad (\rm IV)
 \end{align*}
 The right side of the equations above are numbered \mbox{$(\rm I)-(\rm IV)$}. From $(\rm III)$, we see that the integral vanishes due to the periodicity in $y$ of the functions $\chi$ and $\omega$. From $(\rm IV)$, we simplify the product of the functions $f$ and $\cH$ as $f(u^N_0 + u_D)(v^N_0 + v_D)\chi + \omega (u^N_0 + u_D)$ such that by the Lipschitz continuity property ($(H3)$ and $(H4)$), the integral reduces to
 \begin{align}\label{Est1}
&\leq C \int\limits_{\Om}\int\limits_{\Gamma}|u^N_0 + u_D|(|v^N_0 + v_D||\chi^{N,M}_j| + |\omega^{N,M}|)|\chi^{N,M}_j-\omega^{N,M}_j|~d\sigma dx\nonumber\\
&\leq C \int\limits_{\Om}\int\limits_{\Gamma}\Big(|u^N_0|^2 + |u_D|^2 + |v^N_0|^2 + |v_D|^2 + |\chi^{N,M}_j|^2 + |\omega_j^{N,M}|^2\Big)~d\sigma dx\nonumber\\
&\leq C\Big(\|u^N_0\|^2_{L^2(\Om)} + \|u_D\|^2_{L^2(\Om)}+\|v^N_0\|^2_{L^2(\Om)} + \|v_D\|^2_{L^2(\Om)}\Big)\nonumber\\ 
&+\delta \|\Na\chi^{N,M}_j\|^2_{L^2(\Om\times Y)} + C(\delta) \|\chi^{N,M}_j\|^2_{L^2(\Om\times Y)} +\delta^\prime \|\Na\omega^{N,M}_j\|^2_{L^2(\Om\times Y_{\rm g})} + C(\delta^\prime) \|\omega^{N,M}_j\|^2_{L^2(\Om\times Y_{\rm g})}.
\end{align}
The last expression on the right hand side of \eqref{Est1} is a consequence of the interpolation trace inequality. ($\rm I$) can be estimated as follows:
\begin{align}\label{Est3}
&\int\limits_{\Om}\Na\cdot\Big(\La(u^N_0+u_D,v^N_0+v_D)\Na u_D(t)\Big)-c\pt u_{D}(t)\Big)u^N_0 dx =\nonumber\\
&\int\limits_{\Om}\Big(\La_{u^N_0}\Na u^N_0\Na u_D(t) + \La_{u_D}\Na u_D(t)\cdot\Na u_D(t) + \La_{v^N_0}\Na v_0^N\cdot\Na u_D(t) +\La_{v_D}\Na v_D\cdot\Na u_D(t)\Big)u^N_0 dx\nonumber\\
&-\int\limits_{\Om}c\pt u_D(t)u^N_0~dx,\quad\mbox{where $\La_{\gamma} =\dfrac{\p\La}{\p\gamma}$}.
\end{align}
Since the derivatives of $\La(s,r)$ are bounded by virtue of Lemma \ref{boundedTensor}, we get
\begin{align}\label{Est3a}
\leq C\Big(\|\Na u^N_0\|^2_{L^2(\Om)} + \|\Na u_D(t)\|^2_{L^(\Om)} + \|\Na v^N_0\|^2_{L^(\Om)} + \|\Na v_D\|^2_{L^(\Om)} \Big) + \tilde{C}\Big(\|\pt u_D\|^2_{L^(\Om)} + \|u^N_0\|^2_{L^(\Om)} \Big).
\end{align}
Repeating the steps leading to \eqref{Est3a}, we obtain the estimate for $(\rm II)$. We choose \mbox{$\delta=\La_1/2$} and \mbox{$\delta^\prime=D_1/2$} in \eqref{Est1}, substituting on the left hand side of $(\rm III)$, using the regularity properties of $u_D$ and $v_D$, integrating all with respect to time and applying the Gronwall's inequality, we obtain
\begin{align*}
 & \|u_0^N\|_{L^2(\Om)}^2 + \|v_0^N\|_{L^2(\Om)}^2 
   +  \| \nabla u_0^N \|_{L^2(\Om)}^2 +  \| \nabla v_0^N \|_{L^2(\Om)}^2 \\
   & + \|\nabla_y \chi_{g,j}^{N,M} \|^2_{L^2(\Om \times Y_g) }
   + \|\nabla_y \chi_{s,j}^{N,M} \|^2_{L^2(\Om \times Y_s) }
   + \|\nabla_y \omega_j^{N,M} \|^2_{L^2(\Om \times Y_g) }  
   \leq C,
\end{align*}
for all $t \in (0,T)$ and $N \in \mathbb{N}$ with constant $C$ depending on the Dirichlet boundary conditions, initial boundary data, $T$, $\omega_j^{N,M}$ and  $\chi_j^{N,M}.$
Next, we derive $L^\infty$-estimates with respect to gradients in time for the heat and mass equation by testing the variational formulation \eqref{Disc1}--\eqref{Disc2-D} with $(\partial_t u_0^N , \partial_t v_0^N)$:
\begin{align}\label{Est4}
&\dfrac{1}{2}\dfrac{d}{dt}\int\limits_{\Om}c|\pt u^N_0(t)|^2 + \dfrac{1}{2}\dfrac{d}{dt}\int\limits_{\Om}\theta|\pt v^N_0(t)|^2 + \int\limits_{\Om}\dfrac{\p}{\p t}\Big(\La(u^N_0+u_D,v^N_0+v_D)\Na u^N_0\Big)\Na \pt u^N_0 dx \nonumber\\
&+ \int\limits_{\Om}\dfrac{\p}{\p t} \Big(\D(u^N_0+u_D,v^N_0+v_D)\Na v^N_0\Big)\Na \pt v^N_0 dx\nonumber\\
&= \int\limits_{\Om}\Big(\Na\cdot\Big(\dfrac{\p}{\p t}\Big(\La(u^N_0+u_D,v^N_0+v_D)\Na u^N_0(t)\Big)\Big)-c\pt u^N_0(t)\Big)\pt u^N_0~dx\nonumber\\
&+ \int\limits_{\Om}\Big(\Na\cdot\Big(\dfrac{\p}{\p t}\Big(\D(u^N_0+u_D,v^N_0+v_D)\Na v^N_0(t)\Big)\Big)-c\pt v^N_0(t)\Big)\pt v^N_0~dx\nonumber\\
\end{align}
The last term on the left hand side of \eqref{Est4} can be estimated as follows
\begin{align}\label{Est5}
&\int\limits_{\Om}\dfrac{\p}{\p t}\Big(\La(u^N_0+u_D,v^N_0+v_D)\Na u^N_0\Big)\Na \pt u^N_0 dx\nonumber\\ 
&= \int\limits_{\Om}(\La_{u^N_0}\pt u^N_0\Na u^N_0 + \La_{u_D}\pt u_D\Na u^N_0 + \La_{v^N_0}\pt v^N_0\Na u^N_0 + \La_{v_D}\pt v_D\Na u^N_0)\Na \pt u^N_0 dx\nonumber\\
&+ \int\limits_{\Om}\La(u^N_0+u_D,v^N_0+v_D)|\Na\pt u^N_0|^2 dx,\quad\mbox{where $\La_{\gamma} =\dfrac{\p\La}{\p\gamma}.$}
\end{align}
Applying the boundedness property of the derivatives of $\La(u,v)$ (Lemma \ref{boundedTensor}) and the regularities of the boundary data ensures that the first term on the right hand side of \eqref{Est5} is bounded. A similar kind of estimate is also obtained in terms of the variable $v^N_0.$ From the right hand side of \eqref{Est4}, we first integrate by parts in the higher order term and then differentiate the resulting expression with respect to time
\begin{align}\label{Est6}
 &\int\limits_{\Om}\Big(\Na\cdot\Big(\dfrac{\p}{\p t}\Big(\La(u^N_0+u_D,v^N_0+v_D)\Na u^N_0(t)\Big)\Big)-c\pt u^N_0(t)\Big)\pt u^N_0~dx\nonumber\\
 &=- \int\limits_{\Om}\Big(\La_{u^N_0}\pt u^N_0 + \La_{u_D}\pt u_D + \La_{v^N_0}\pt v^N_0 + \La_{v_D}\pt v_D\Big)\Na u_D(t)\Na\pt u^N_0 dx\nonumber\\
 &-\int\limits_{\Om}\Big(\La(u^N_0+u_D,v^N_0+v_D)\Na\pt u_D(t)\Na\pt u^N_0 + c\pt u_D(t)\pt u^N_0\Big)~dx.
\end{align}
Again, the first term in \eqref{Est6} is bounded by virtue of Lemma \ref{boundedTensor} and the regularities of $u_D$ and $v_D$. Integrating \eqref{Est4} with respect to time, using Lemma \ref{coercive} and the regularity properties of $u_D$ and $v_D$, we deduce
 \begin{align}\label{Est7}
    &  \| \partial_t u_0^N (t)\|_{L^2(\Om)}^2 + \| \partial_t v_0^N (t)\|_{L^2(\Om)}^2 
   + \| \nabla \partial_t u_0^N  (t) \|_{L^2(\Om)}^2 + \| \nabla \partial_t v_0^N (t) \|_{L^2(\Om)}^2 \nonumber\\
   & \leq  \| \partial_t u_0^N (0)\|_{L^2(\Om)}^2 + \| \partial_t v_0^N  (0)\|_{L^2(\Om)}^2 + C\Big(1 + \int\limits_0^t\!\!\int\limits_{\Om}|\pt u^N_0|^2 + \int\limits_0^t\!\!\int\limits_{\Om}|\pt v^N_0|^2\Big),
 \end{align}
 for $t \in (0,T)$ and $N \in \mathbb{N}.$ In \eqref{Est7}, the norms of the time derivatives at \mbox{$t=0$} need to be estimated. To this end, we evaluate the weak formulation \eqref{Disc1}--\eqref{Disc2-D} at \mbox{$t=0$} and test with $(\varphi,\vartheta)=(\partial_t u_0^N(0),  \partial_t v_0^N(0))$. This leads to
\begin{align}
&\int\limits_{\Om}c|\pt u^N_0(0)|^2 dx + \int\limits_{\Om}\La(u^N_0(0)+u_D(0),v^N_0(0)+v_D(0))\Na u^N_0(0)\Na\pt u^N_0(0) dx\nonumber\\
&+ \int\limits_{\Om}c|\pt v^N_0(0)|^2 dx + \int\limits_{\Om}\D(u^N_0(0)+u_D(0),v^N_0(0)+v_D(0))\Na v^N_0(0)\Na\pt v^N_0(0) dx\nonumber\\
&=-\int\limits_{\Om}\La(u^N_0(0)+u_D(0),v^N_0(0)+v_D(0))\Na u_D(0)\Na\pt u^N_0(0)dx -\int\limits_{\Om}c\pt u_D(0)\pt u^N_0(0) dx\label{rhs1}\\
&-\int\limits_{\Om}\D(u^N_0(0)+u_D(0),v^N_0(0)+v_D(0))\Na v_D(0)\Na\pt v^N_0(0)dx -\int\limits_{\Om}\theta\pt v_D(0)\pt v^N_0(0) dx, \label{rhs2}
\end{align}
where we have used integration by parts in \eqref{rhs1}-\eqref{rhs2}.  By using the boundedness property of the diffusion tensors, the regularity properties of the initial data and the Dirichlet boundary data, together with the stability of the projection operators $P^N_x$ and $P^N_y$ with respect to the $H^2-$norms, we obtain the appropriate bounds on the derivative at $t = 0$:
 \begin{align}
 \label{initialboundarydatabound2}
  \| \partial_t u_0^N(0)\|_{L^2(\Om)}^2 + \| \partial_t v_0^N(0)\|_{L^2(\Om)}^2  \leq C
 \end{align}
 By substituting \eqref{initialboundarydatabound2} into \eqref{Est7} and using Gronwall's inequality, we complete the proof of the theorem.
\end{proof}
The estimates in Theorem \ref{thm:almostcompactnesstheorem} provide the compactness of the solutions $(u_0^N, v_0^N, \chi^{N,M}, \omega^{N,M})$, which we require to pass to the limit as $N,M \rightarrow \infty$ in the nonlinear terms of the weak formulation \eqref{Disc1}--\eqref{Disc2-D}. 
%%%%%%%%%%%%%%%%%%%%%%%%%%%%%%%%%%%%%%%%%%%%%%%%%%%%%%%%
\section{Convergence of the Galerkin approximation}\label{convergence}
%%%%%%%%%%%%%%%%%%%%%%%%%%%%%%%%%%%%%%%%%%%%%%%%%%%%%%%%%%%
In this section, we focus on the convergence of the Galerkin approximating vector function \mbox{$(u_0^N, v_0^N , \chi^{N,M}, \omega^{N,M})$} to the weak solution of \eqref{heatmassdiffusion} together with the cell problem \eqref{cellproblem}. Based on the uniform estimates established in Subsection~\ref{subsect:uniformestimatesforthediscretizedproblems}, we derive the convergence properties of the sequence of finite-dimensional approximations. The convergence theorem is stated in the sequel.
\begin{theorem}
\label{thm:convergenceresults1}
There exists a subsequence, still denoted by $(u^N_0, v^N_0,\chi^{N,M},\omega^{N,M})$ and a limit vector function $(u_0,v_0,\chi,\omega)\in L^2(0,T;H^1(\Om))\times L^2(0,T;H^1(\Om))\times L^2(\Om;[H^1_{\#}(Y)]^d)\times L^2(\Om;[H^1_{\#}(Y_{\rm g})]^d)$ with $(\pt u_0^N, \pt v_0^N) \in L^2((0,T)\times\Om))\times L^2((0,T)\times \Om)$ such that
\begin{align}
&(u^N_0, v^N_0,\chi^{N,M},\omega^{N,M})\rightarrow (u_0, v_0,\chi,\omega)  \label{eqn:convergenceresults1} \\
% \end{align}
& \quad \text{weakly in } L^2(0,T;H^1(\Om))\times L^2(0,T;H^1(\Om)) \times L^2(\Om;[H^1_{\#}(Y)]^d) \times L^2(\Om;[H^1_{\#}(Y_{\rm g})]^d), \notag \\
% \begin{align}
&(\pt u^N_0, \pt v^N_0) \rightarrow (\pt u_0, \pt v_0)~\mbox{weakly in $L^2$},\label{eqn:convergenceresults2}\\
&(u^N_0, v^N_0,\chi^{N,M},\omega^{N,M})\rightarrow (u_0, v_0,\chi,\omega)~\mbox{strongly in $L^2,$} \label{eqn:convergenceresults3}\\
& \chi^{N,M}|_\Gamma \rightarrow \chi |_\Gamma,
\quad \text{and} \quad
\omega^{N,M}|_\Gamma \rightarrow \omega |_\Gamma
\quad \text{strongly in} \quad L^2((0,T) \times \Omega, L^2(\Gamma)).\label{eqn:convergenceresults2-1}
\end{align}
\end{theorem}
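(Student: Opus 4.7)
The plan is to combine weak compactness from reflexivity with Aubin--Lions-type arguments for strong convergence, then transfer convergence to the cell variables via their parametric dependence on $(u,v)$, and finally obtain trace convergence via the interpolation trace inequality. The hard part will be the cell functions: they inherit $x$-dependence only through parameters, so no compactness in $x$ is directly available.

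First, the uniform estimates of Theorem \ref{thm:almostcompactnesstheorem} together with the reflexivity of the target spaces allow, via the Banach--Alaoglu theorem, the extraction of a subsequence (not relabeled) such that $(u_0^N, v_0^N) \rightharpoonup (u_0, v_0)$ weakly in $L^2((0,T); H^1(\Om))^2$, $(\partial_t u_0^N, \partial_t v_0^N) \rightharpoonup (\partial_t u_0, \partial_t v_0)$ weakly in $L^2((0,T) \times \Om)^2$, and $(\chi^{N,M}, \omega^{N,M}) \rightharpoonup (\chi, \omega)$ weakly in $L^2(\Om; H^1_\#(Y)^d) \times L^2(\Om; H^1_\#(Y_{\rm g})^d)$. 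This covers \eqref{eqn:convergenceresults1} and \eqref{eqn:convergenceresults2}.

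Second, to upgrade the macroscopic pair to strong convergence in $L^2((0,T); L^2(\Om))$, I would invoke the Aubin--Lions--Simon compactness lemma with the triple $H^1(\Om) \hookrightarrow L^2(\Om) \hookrightarrow L^2(\Om)$, the first embedding being compact by Rellich--Kondrachov. The $L^\infty((0,T); H^1(\Om))$-bound together with the $L^2((0,T); L^2(\Om))$-bound on $\partial_t u_0^N$ (and $\partial_t v_0^N$) from Theorem \ref{thm:almostcompactnesstheorem} meet the hypotheses, yielding $u_0^N \to u_0$ and $v_0^N \to v_0$ strongly in $L^2((0,T); L^2(\Om))$, and, along a further subsequence, a.e.\ on $\Om^T$.

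The principal obstacle is the strong convergence of the cell variables, since their $(t,x)$-dependence is only through the parameters $(u_0^N + u_D, v_0^N + v_D)$ and no Sobolev regularity in $x$ is available from the estimates. I would exploit the elliptic nature of the cell problem: for a.e.\ fixed $(t,x)$, the pair $(\chi^{N,M}(t,x,\cdot), \omega^{N,M}(t,x,\cdot))$ is the Galerkin projection onto the span of $\{\eta_k\}_{k=1}^M$ of the cell problem parametrized by $(u_0^N(t,x) + u_D, v_0^N(t,x) + v_D)$. By the Lax--Milgram-based wellposedness of Lemma \ref{UCell} combined with the Lipschitz dependence of the interface term on its arguments (hypotheses $(H3)$--$(H4)$ and Lemma \ref{boundedTensor}), the solution map from parameters into $H^1_\#(Y)^d \times H^1_\#(Y_{\rm g})^d$ is continuous. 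Together with the a.e.\ convergence $(u_0^N, v_0^N) \to (u_0, v_0)$ and the standard inner Galerkin convergence as $M \to \infty$, one obtains pointwise a.e.\ convergence of $(\chi^{N,M}, \omega^{N,M})$ to $(\chi, \omega)$; the dominated convergence theorem, with the uniform $L^2(\Om; H^1(Y))$-bound serving as an envelope, then promotes this to strong $L^2((0,T)\times\Om; L^2(Y)^d) \times L^2((0,T)\times\Om; L^2(Y_{\rm g})^d)$ convergence, establishing \eqref{eqn:convergenceresults3}.

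Finally, the trace convergence \eqref{eqn:convergenceresults2-1} follows from the interpolation trace inequality $\|\varphi\|_{L^2(\Gamma)}^2 \leq C \|\varphi\|_{L^2(Y)} \|\varphi\|_{H^1(Y)}$, applied fiberwise in $(t,x)$ and integrated, yielding
\begin{align*}
\|\chi^{N,M} - \chi\|_{L^2((0,T)\times\Om; L^2(\Gamma))}^2 \leq C \,\|\chi^{N,M} - \chi\|_{L^2((0,T)\times\Om; L^2(Y))} \,\|\chi^{N,M} - \chi\|_{L^2((0,T)\times\Om; H^1(Y))}.
\end{align*}
The first factor tends to zero by the strong convergence just established, while the second factor is uniformly bounded by Theorem \ref{thm:almostcompactnesstheorem}; the analogous argument for $\omega^{N,M}$ completes \eqref{eqn:convergenceresults2-1}.
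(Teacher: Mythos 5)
Your treatment of the macroscopic pair coincides with the paper's: weak convergences \eqref{eqn:convergenceresults1}--\eqref{eqn:convergenceresults2} from the uniform bounds and reflexivity, and strong $L^2((0,T)\times\Om)$ convergence of $(u_0^N,v_0^N)$ from an Aubin--Lions/Simon argument using the $H^1(\Om)$-bound and the $L^2$-bound on the time derivatives. Where you genuinely diverge is in the cell variables. The paper gets strong convergence of $(\chi^{N,M},\omega^{N,M})$ by invoking a compact embedding $H^1(\Om,H^1(Y))\hookrightarrow L^2(\Om,H^\beta(Y))$, $1/2<\beta<1$, followed by continuity of the trace $H^\beta(Y)\hookrightarrow L^2(\Gamma)$; you instead exploit the parametric structure of the elliptic cell problem (continuity of the Lax--Milgram solution map in the parameters $(u,v)$ via Lemmas \ref{UCell} and \ref{boundedTensor}, a.e.\ convergence of the parameters, inner Galerkin convergence in $M$), and you obtain the trace convergence \eqref{eqn:convergenceresults2-1} from the interpolation trace inequality $\norm{\varphi}_{L^2(\Gamma)}^2\leq C\norm{\varphi}_{L^2(Y)}\norm{\varphi}_{H^1(Y)}$ rather than from an $H^\beta$ trace theorem. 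Your route has the virtue of not requiring any Sobolev regularity of the cell functions in $x$ — which is indeed never established in Theorem \ref{thm:almostcompactnesstheorem}, so the paper's appeal to an $H^1(\Om,H^1(Y))$ embedding rests on an estimate that is not available — and your trace step is clean and uses only quantities that are actually bounded.

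However, two steps in your cell-variable argument need repair. First, the identification of $(\chi^{N,M}(x,\cdot),\omega^{N,M}(x,\cdot))$ as the fiberwise Galerkin projection of the parametrized cell problem at each fixed $x$ is not exact: the discrete problem \eqref{Disc2} is posed over the tensor-product space on $\Om\times Y$, so the $x$-directions are projected as well and the coefficients $\X_{jk,p},\W_{jk,p}$ solve a single coupled algebraic system; a fiberwise characterization only emerges after controlling the error committed by $P_x^N$ acting on the $x$-dependent interface data. Second, and more seriously, the passage from a.e.\ convergence to strong $L^2$ convergence via ``dominated convergence, with the uniform $L^2(\Om;H^1(Y))$-bound serving as an envelope'' is not valid: a uniform norm bound is not a dominating function, and a.e.\ convergence plus an $L^2$ bound does not imply $L^2$ convergence (consider $g_N=N\mathbf{1}_{[0,N^{-2}]}$). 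To close this you need either a genuine pointwise majorant — e.g.\ a fiberwise a priori bound $\norm{(\chi,\omega)(x,\cdot)}_{H^1}\leq C\bigl(1+|u_0^N(x)+u_D|+|v_0^N(x)+v_D|\bigr)$ combined with strong convergence of the majorizing sequence and the generalized dominated convergence theorem — or uniform integrability of $|\chi^{N,M}|^2$ and Vitali's theorem. With one of these in place your argument goes through and, arguably, on firmer ground than the compact-embedding route in the paper.
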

%%%%%%%%%%%%%%%%%%%%%%%%%%%%%%%%%%%%%%%%%%%%%%%%%%%%%%%%%%%%%
\begin{proof}
The estimates from Theorem~\ref{thm:almostcompactnesstheorem} immediately imply \eqref{eqn:convergenceresults1} and \eqref{eqn:convergenceresults2}. Since 
\begin{align*}
 \|u_0^N \|_{L^2((0,T);H^1(\Om))} + \| \partial_t u_0^N\|_{L^2( (0,T), L^2(\Om))}\leq C,
 \end{align*}
 and 
 \begin{align*}
 \|v_0^N \|_{L^2((0,T);H^1(\Om))} + \| \partial_t v_0^N\|_{L^2( (0,T), L^2(\Om))}\leq C.
\end{align*}
By the compactness theorem, see \cite[Chapter~4]{AdamsFournier:2003a}, there exist a subsequence such 
that 
\begin{align*}
  u_0^N \rightarrow u_0 \quad \text{and} \quad  v_0^N \rightarrow v_0, \quad \text{strongly in }
  \quad L^2((0,T) \times \Om).
\end{align*}
We note that Theorem~\ref{uniformestimate3} implies 
\begin{align*}
 \|\chi^{N,M} \|_{L^2(\Om;H^1_\#(Y))} + \|\omega^{N,M} \|_{L^2(\Om;H^1_\#(Y_g))} \leq C.
\end{align*}
Since the embedding 
\begin{align*}
 H^1(\Om, H^1(Y)) \hookrightarrow L^2(\Om,H^\beta(Y)), 
\end{align*}
is compact for every $1/2 < \beta < 1,$ it follows from Lion-Aubin's compactness theorem that there exist subsequences such that
\begin{align}\label{eqn:strongconvergence}
(\chi^{N,M}, \omega^{N,M}) \rightarrow (\chi, \omega)\quad\text{strongly in}\quad L^2((0,T) \times L^2(\Om,H^\beta(Y))),
\end{align}
for every $1/2 < \beta < 1.$ Due to the continuity of the trace operator 
\begin{align*}
 H^\beta(Y) \hookrightarrow L^2(\Gamma), \quad \text{for} \quad 1/2 < \beta < 1,
\end{align*}
yields \eqref{eqn:convergenceresults3} and \eqref{eqn:convergenceresults2-1}.
\end{proof}
}
\begin{theorem}
\label{thm:convergenceresults2}
Let the assumptions ($H5$)-($H6$) on the data be satisfied. Assume further that the projection operators $P_x^N$ and $P_y^M$ defined in \eqref{proj1} and \eqref{proj3} are stable with respect to the $L^2$-norm and $H^2$-norm. Let $(u_0,v_0,\chi,\omega)$ be the limit function obtained in Theorem~\ref{thm:convergenceresults1}. Then, the function $(u_0,v_0,\chi,\omega)=(u_0+u_D,v_0+v_D,\chi,\omega)$ is the unique weak solution of the problem \eqref{heatmassdiffusion}-\eqref{cellproblem}, and the whole sequence of Galerkin approximates converges.
\end{theorem}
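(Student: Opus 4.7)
The plan is to test the discrete identities \eqref{Disc1}--\eqref{Disc2} against any test quadruple $(\varphi, \vartheta, \phi, \psi)$ drawn from the finite-dimensional subspace of some fixed level $(N_0, M_0)$, pass $N, M \to \infty$ using the convergences in Theorem \ref{thm:convergenceresults1}, and then extend the resulting identity by density. Density of $\bigcup_{N_0, M_0}$ of the finite-dimensional subspaces in $H^1_D(\Om)^2 \times L^2(\Om; [H^1_{\#}(Y)]^d) \times L^2(\Om; [H^1_{\#}(Y_{\rm g})]^d)$ is guaranteed by the choice of $\{\xi_j\}, \{\eta_k\}$ as Hilbert bases combined with the $H^2$-stability of the projections $P_x^N, P_y^M$. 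The terms that are linear in the unknowns -- the time-derivative terms and the Dirichlet-driven right-hand sides in \eqref{Disc1} -- pass to the limit immediately from the weak convergences \eqref{eqn:convergenceresults1}--\eqref{eqn:convergenceresults2}.

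The substantive work lies in the quasilinear flux terms. For $\int_{\Om} \La(u_0^N + u_D, v_0^N + v_D) \Na u_0^N \cdot \Na \varphi \, dx$ (and its $\D$-counterpart), I would argue that $\La(u_0^N + u_D, v_0^N + v_D) \to \La(u_0 + u_D, v_0 + v_D)$ strongly in $L^p(\Om^T)$ for every finite $p$, by combining (i) a.e.\ convergence along subsequences of $u_0^N, v_0^N$ afforded by \eqref{eqn:convergenceresults3}, (ii) continuity of $\La$ in its scalar arguments from Lemma \ref{boundedTensor}, and (iii) the uniform bound $\La \le \La_1$ from $(H1)$ together with dominated convergence. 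Because the tensor is itself defined via \eqref{Disc1-lambda} through integrals over $Y_{\rm g}, Y_{\rm s}$ and $\Gamma$ of the cell solutions, passing to the limit inside the tensor requires: the surface pieces on $\Gamma$ converge by strong trace convergence \eqref{eqn:convergenceresults2-1} combined with Lipschitz dependence on $u_0^N, v_0^N$; while the volume pieces, which are quadratic in $\Na_y \chi^{N,M}, \Na_y \omega^{N,M}$, are handled by testing the discrete cell identity \eqref{Disc2} against $\alpha(x)\chi^{N,M}(x,y)$ (an admissible test function since $\chi^{N,M}$ lies in the Galerkin trial space) so as to reduce the quadratic expressions to lower-order ones involving data and $\Gamma$-traces, which then pass strongly. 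Pairing the resulting strong $L^p$-convergence of $\La$ with the weak $L^2$-convergence of $\Na u_0^N$ yields the limit via the weak-strong product principle.

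The cell-problem reaction term $Q\int_{\Om}\int_{\Gamma} f(u_0^N + u_D)\,\cH(\cdot)\,(\phi - \psi)\, d\sigma\, dx$ passes to the limit by Lipschitz continuity of $f, \cH$ from $(H3)$--$(H4)$, strong $L^2(\Om)$-convergence of $u_0^N, v_0^N$, and strong $L^2((0,T) \times \Om; L^2(\Gamma))$-convergence of the traces $\chi^{N,M}|_\Gamma, \omega^{N,M}|_\Gamma$. For the initial data, \eqref{DiscInit} gives $u_0^N(0) = P_x^N(u_I - u_D(0)) \to u_I - u_D(0)$ strongly in $L^2(\Om)$ by $L^2$-stability of $P_x^N$ and completeness of the basis; combined with $u_0^N \rightharpoonup u_0$ in $L^2(0,T;H^1(\Om))$ and $\pt u_0^N \rightharpoonup \pt u_0$ in $L^2$, the continuous-in-time representative satisfies $u(0) = u_I$, and analogously for $v$. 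Uniqueness of the macroscopic solution was already established in the earlier Proposition, while Lemma \ref{UCell} fixes the cell solutions (modulo the vector $(C,C)$, pinned down by the zero-mean condition in $H^1_{\#}$); uniqueness then promotes the subsequential convergence of Theorem \ref{thm:convergenceresults1} to convergence of the whole sequence.

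The principal obstacle is closing the limit inside the nonlinear micro-macro coupling in the tensors \eqref{Disc1-lambda}--\eqref{Disc2-D}, whose entries depend simultaneously on the macroscopic unknowns through the Arrhenius factor $f(u_0^N + u_D)$ and on the cell functions through both quadratic volume integrals of gradients and surface integrals of traces. The volume piece is the delicate one: since Theorem \ref{thm:almostcompactnesstheorem} only yields weak $L^2$-convergence of $\Na_y \chi^{N,M}, \Na_y \omega^{N,M}$, passing to the limit in the associated quadratic forms necessarily requires an energy-type identity obtained from the discrete cell equation itself, as sketched above, before the resulting strong convergence of $\La, \D$ can be paired with the weak convergence of $\Na u_0^N, \Na v_0^N$ in the macroscopic energy balance.
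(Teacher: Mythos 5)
Your proposal is correct and follows the same route the paper intends: pass to the limit in the Galerkin identities \eqref{Disc1}--\eqref{Disc2} against fixed finite-dimensional test functions using the convergences of Theorem \ref{thm:convergenceresults1}, extend by density, recover the initial data via projection stability, and use uniqueness to upgrade subsequential to whole-sequence convergence. The paper's own proof is a one-sentence appeal to ``standard arguments,'' so your write-up in fact supplies the substance it omits --- in particular the genuinely delicate point that the quadratic gradient terms in the tensors \eqref{Disc1-lambda}--\eqref{Disc2-D} need strong $L^2(\Om\times Y)$ convergence of $\Na_y\chi^{N,M},\Na_y\omega^{N,M}$, obtained from an energy identity in the discrete cell equation, before the weak--strong pairing with $\Na u_0^N,\Na v_0^N$ can close the argument.
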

%-----------
\begin{proof}
By using Theorem~\ref{thm:convergenceresults1} and taking the limit of \eqref{Disc1}--\eqref{Disc2} for $N \rightarrow \infty$ and $M \rightarrow \infty,$ standard arguments yield the variational formulation \eqref{varfHomo}--\eqref{varfCell} for the function $(u_0,v_0,\chi,\omega)=(u_0+u_D,v_0+v_D,\chi,\omega).$
\end{proof}
% }
\section{Simulation results}\label{numericalresults}
In this section, we present some numerical experiments to demonstrate the behavior of the quasilinear parabolic equations coupled to the cell problems, which possess a reaction-diffusion character. The numerical experiments are performed in the finite element software package, FreeFeM++ \cite{Hecht12}. The scheme involves a discretization in the space variable using triangular elements, followed by numerical integration in time using a semi-implicit Euler scheme. The discretized system is solved by using UMFPACK incorporated within the Freefem++ platform. From \eqref{heatmassdiffusion} and \eqref{cellproblem}, we know that the cell problems as well as the diffusion tensors depend on the values of the homogenized solutions $u$ and $v$. So, our first numerical example is to examine how $u$ and $v$ influence the behavior of the diffusion tensors. In essence, we seek for a phase $(u, v)$ diagram illustrating the behavior of the tensors for varying values of $(u, v)$. Figure \ref{Fig3} shows the structure of $\La_{11}$ and $\D_{11}$ as functions of $u$ and $v$. The evolution indicates that the effective diffusivity and the effective conductivity increase with temperature, but decrease with concentration. The results are calculated by solving \eqref{cellproblem} in the unit cell $Y=(0,1)\times (0,1)$ with a reference (circular) solid inclusion ($r=0.4$) for the heat equation or hole for the diffusion equation and using the parameter listing in Table \ref{Table1}. 
\begin{table}[!h]
\centering
\caption{Parameter values used in simulation.}
\begin{tabular}{lll}
\toprule
Parameter  	  &  Value   			  &     Description\\
\midrule
$\La_{\rm g}$      &  $2.38\times10^{-4}$     &	conductivity of gaseous part\\
$\La_{\rm s}$      &  $7\times10^{-4}$          &	conductivity of solid part\\
$D$		           &  $0.25$			  &    molecular diffusion\\
$c_{\rm g}$	   & $1.57\times 10^{-3}$     &    heat capacity of gas\\
$c_{\rm s}$	   & $0.69$   			  &   heat capacity of solid\\
$u_{\rm a}$          & $2.5$				  &	activation temperature\\
\bottomrule
\end{tabular}
\label{Table1}                               
\end{table}

\begin{figure}[!h]
\centering
 \begin{subfigure}[b]{0.48\textwidth}
\includegraphics[scale=0.15]{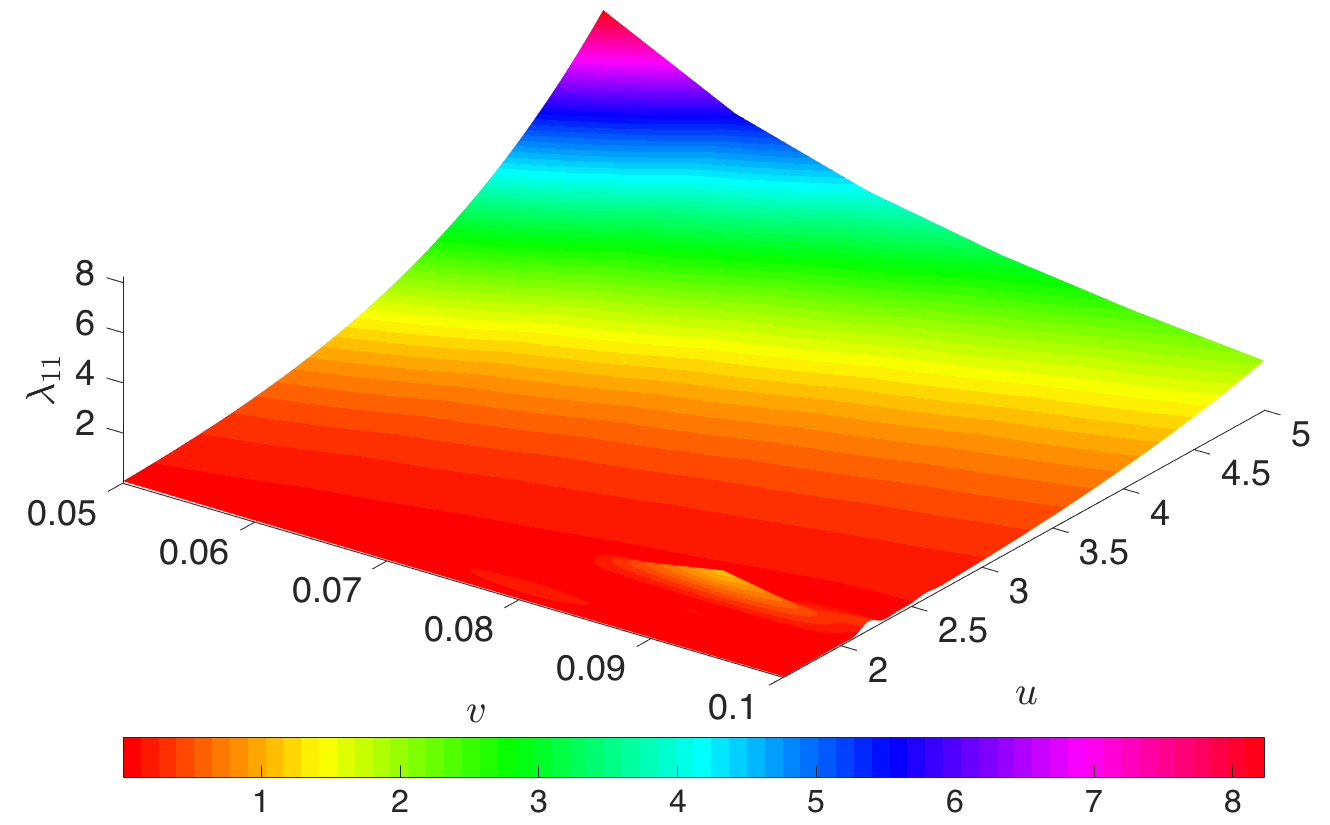}
\caption{}
\end{subfigure}
~
 \begin{subfigure}[b]{0.48\textwidth}
\includegraphics[scale=0.15]{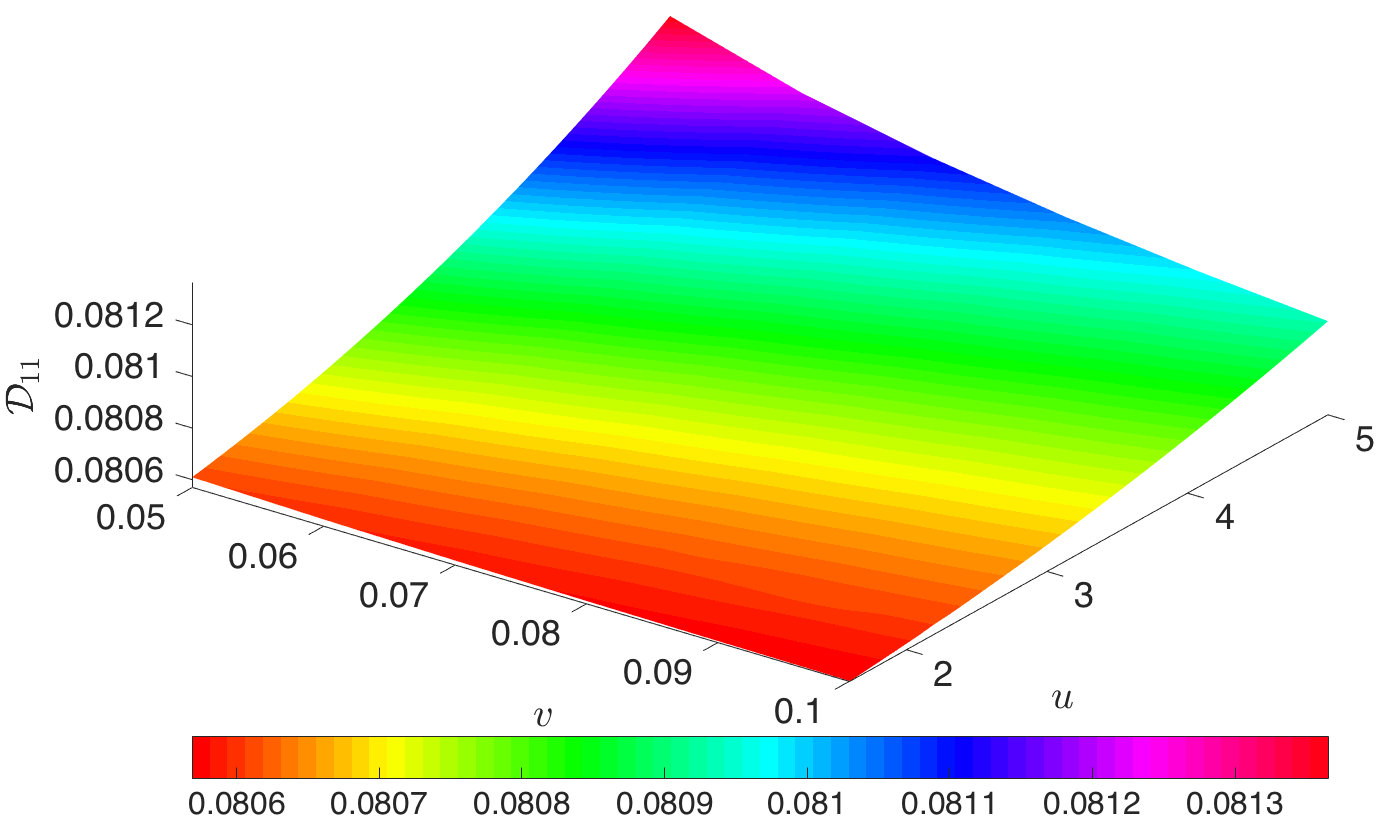}
\caption{}
\end{subfigure}
\caption{Structure of the effective diffusion tensors in a $(u, v)$-plane. (a) effective conductivity ($\La_{11}$) and (b) effective diffusivity ($\D_{11}$)  as functions of $u$ and $v$ for $A=5., Q=2.5$ and $u_{\rm a}=2.5$.}
\label{Fig3}
\end{figure}
Since the multiscale character of the studied macroscopic system is based on its weak coupling with the cell problem \eqref{cellproblem}, for each macroscopic point $x\in\Om$ and for each time, $t\in[0, T]$, \eqref{cellproblem} is solved in the microscale domain $Y$.~Consequently, a system of diffusion coefficients, \mbox{$\La(s, r):\Real\times\Real\rightarrow \Real^{d\times d}; \D(s, r):\Real\times\Real\rightarrow \Real^{d\times d}$} is simultaneously calculated in the process using formulas \eqref{Lmatrix} and \eqref{Dmatrix} respectively. Here, we take $d=2$. The reaction-diffusion behavior exhibited by \eqref{heatmassdiffusion} is a simple consequence of the reaction-diffusion processes taking place at the cell level. The response of the latter processes at the macroscale level is incorporated in the diffusion tensors $\La$ and $\D$, which are nonlocal and weakly anisotropic with respect to the variables, $u$ and $v$. \eqref{heatmassdiffusion} is solved in a rectangular domain $\Om=(0,5)\times (0,2.5)$.  In the simulation, all parameters are assumed to be nondimensional and real constants. The parameters for the microscale simulation are given in Table \ref{Table1}. The kinetic parameters ($Q, A$) and initial and boundary profiles of the field variables $u$ and $v$ are taken as free parameters, and hence reference to their particular values are given wherever necessary. It is worth mentioning that the effects of the kinetic parameters can also be assessed by examining their asymptotic behavior at the cell level.
\begin{figure}[!htp]
    \centering
    \begin{subfigure}[b]{0.45\textwidth}
        \includegraphics[width=\textwidth]{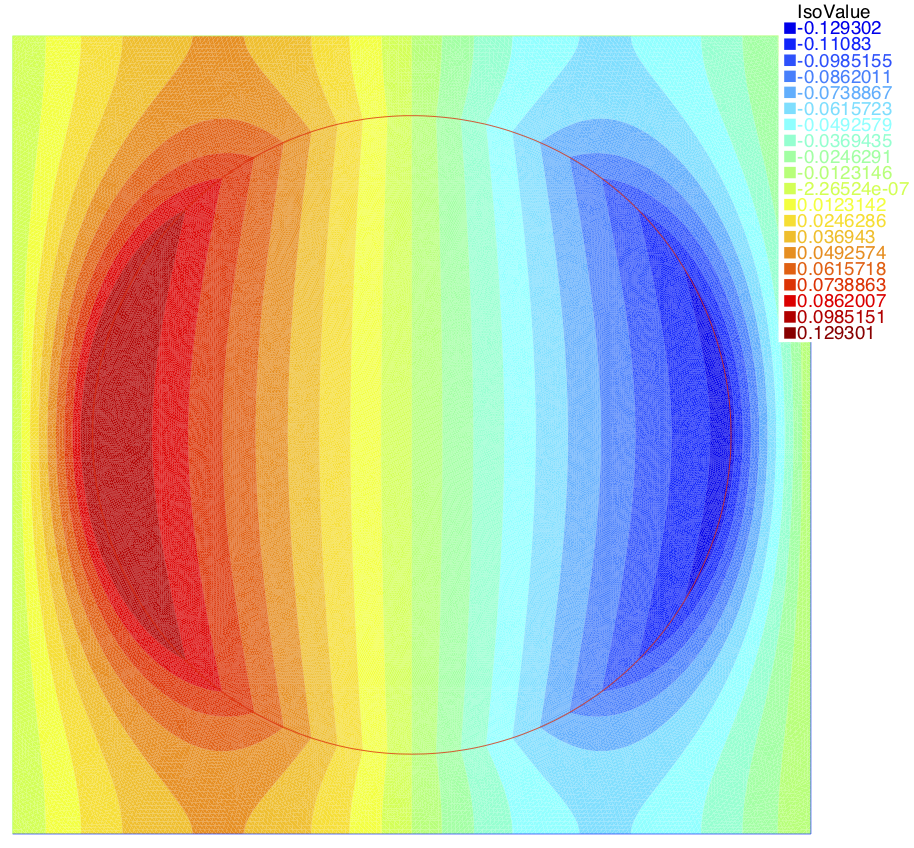}
        \caption{$\chi_1$ when $Q=0$ or $A=0$.}
        \label{figX1}
    \end{subfigure}
    ~ 
        \begin{subfigure}[b]{0.45\textwidth}
        \includegraphics[width=\textwidth]{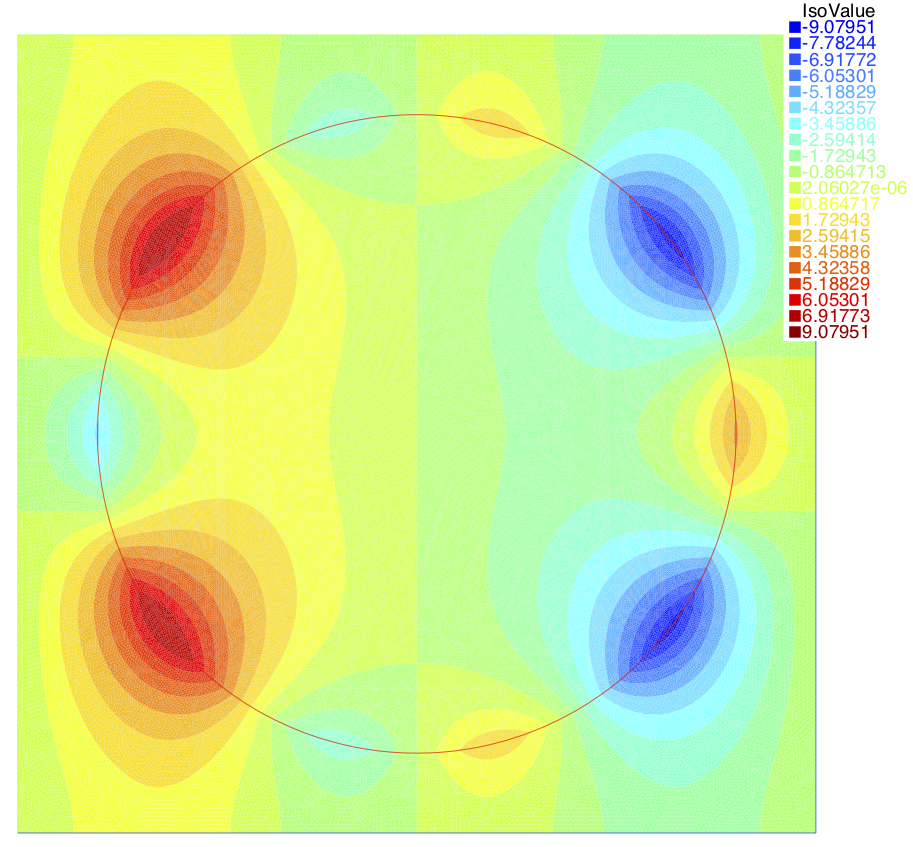}
        \caption{$\chi_1$ when $Q=0.1, u_D=5.0$ and $v_D=0.75$.}
        \label{figX2}
    \end{subfigure}
        ~ 
    \begin{subfigure}[b]{0.45\textwidth}
        \includegraphics[width=\textwidth]{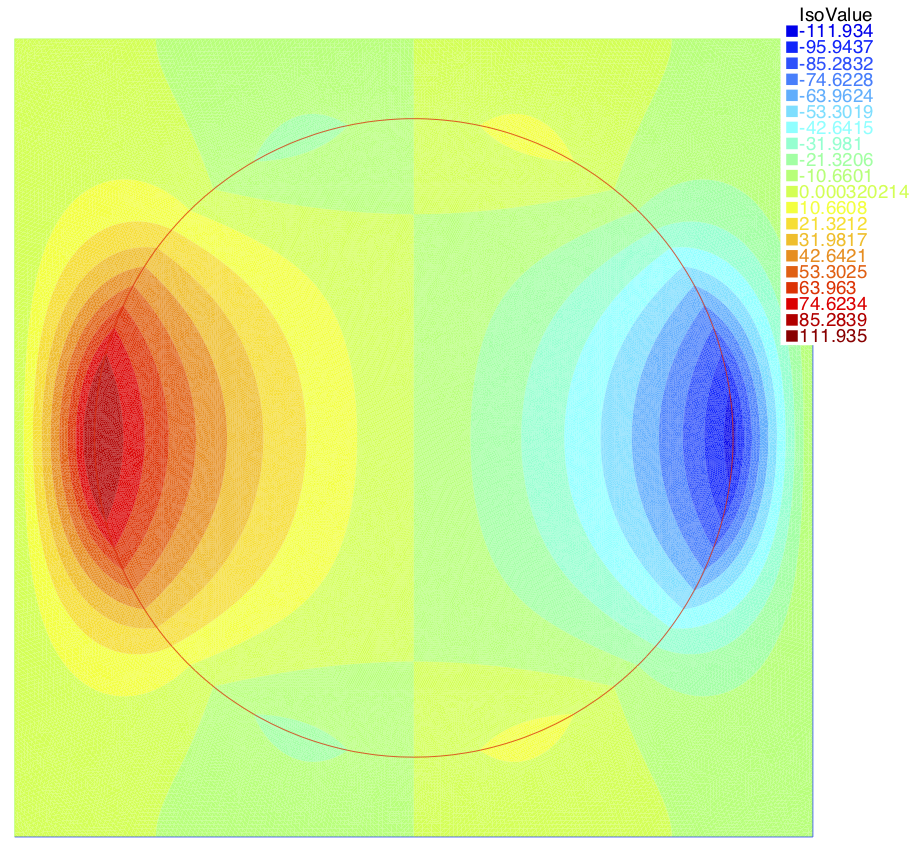}
        \caption{$\chi_1$ when $Q=1.0, u_D=5.0$ and $v_D=0.05$.}
        \label{figX3}
    \end{subfigure}
     ~ 
     \begin{subfigure}[b]{0.45\textwidth}
        \includegraphics[width=\textwidth]{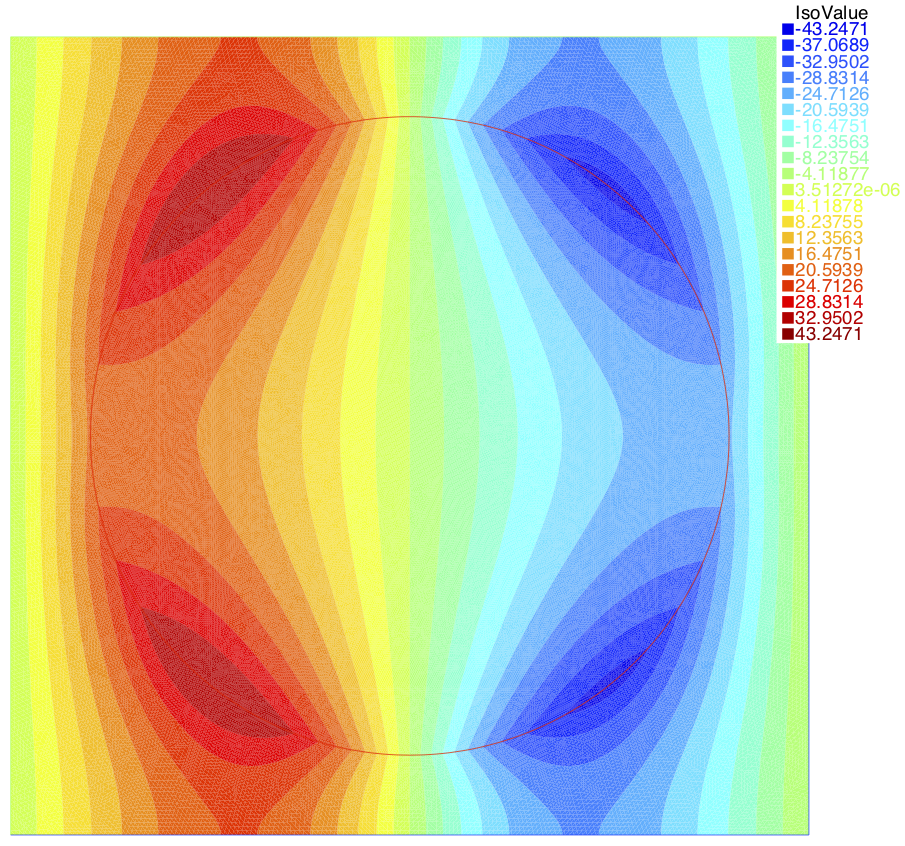}
        \caption{$\chi_1$ when $Q=2.5, u_D=5.0$ and $v_D=0.05$.}
        \label{figX4}
    \end{subfigure}
    \caption{Collage of cell solutions of $\chi_1$ corresponding to various combinations of $u_D, v_D$ and $Q$. Similar patterns of solutions are obtained for $\chi_2$.}
    \label{collage1}
\end{figure}

\begin{figure}[!htp]
    \centering
    \begin{subfigure}[b]{0.45\textwidth}
        \includegraphics[width=\textwidth]{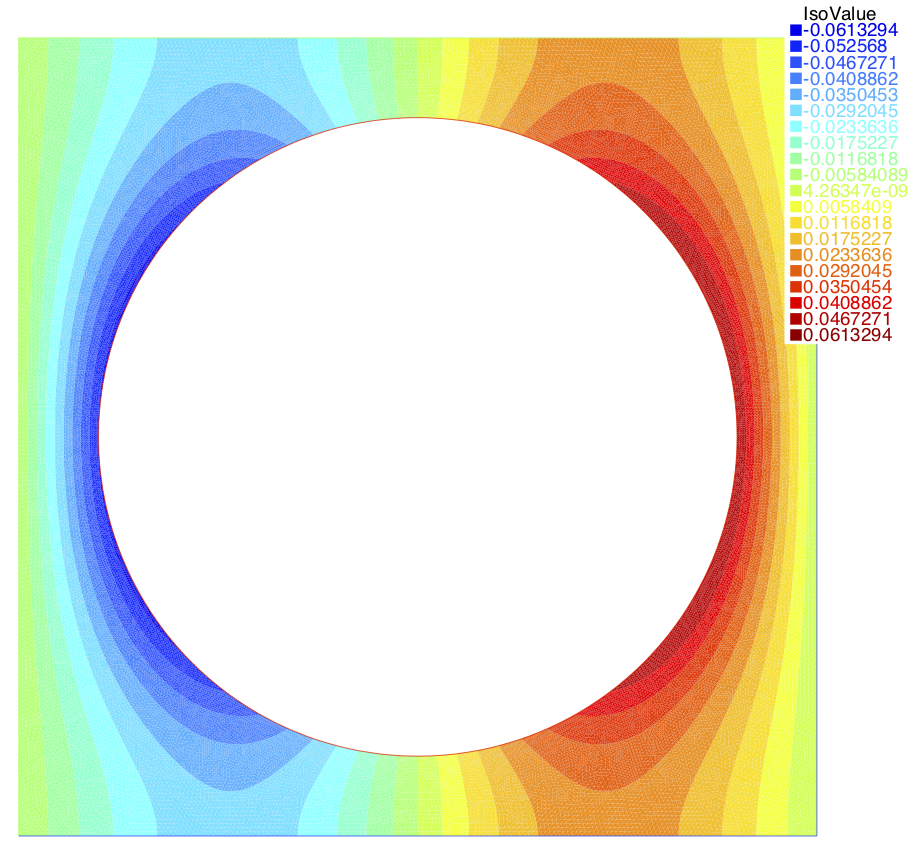}
        \caption{$\omega_1$ when $Q=0$ or $A=0$.}
        \label{figW1}
    \end{subfigure}
    ~ 
        \begin{subfigure}[b]{0.45\textwidth}
        \includegraphics[width=\textwidth]{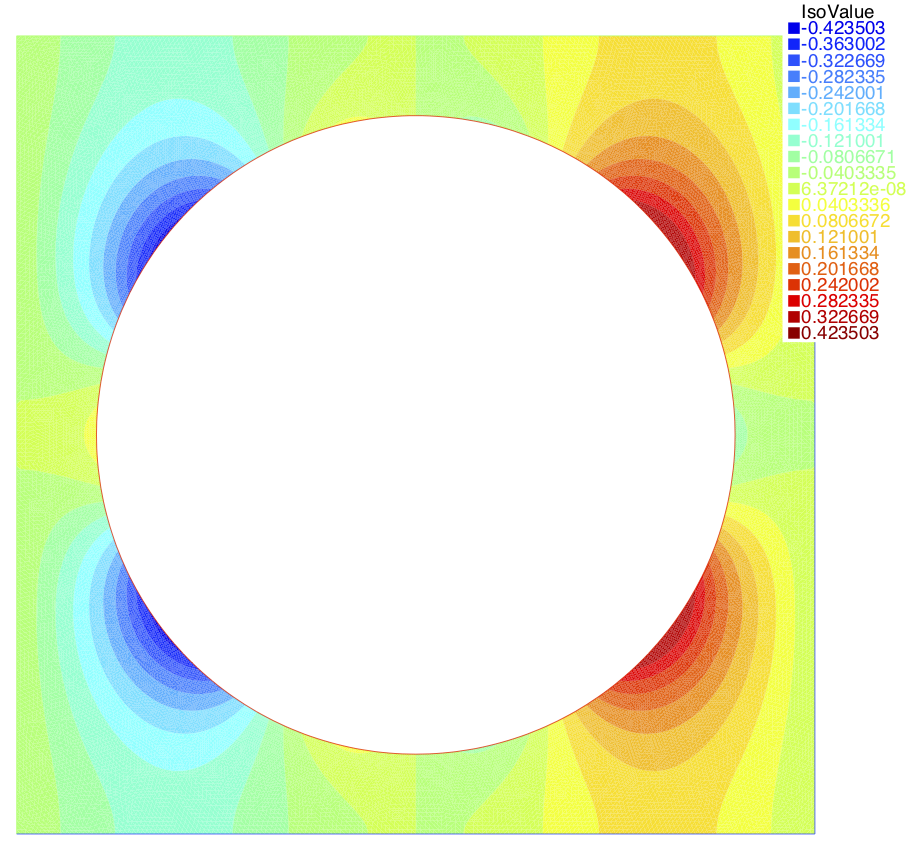}
        \caption{$\omega_1$ when $Q=0.1, u_D=5.0$ and $v_D=0.75$.}
        \label{figW2}
    \end{subfigure}
        ~ 
    \begin{subfigure}[b]{0.45\textwidth}
        \includegraphics[width=\textwidth]{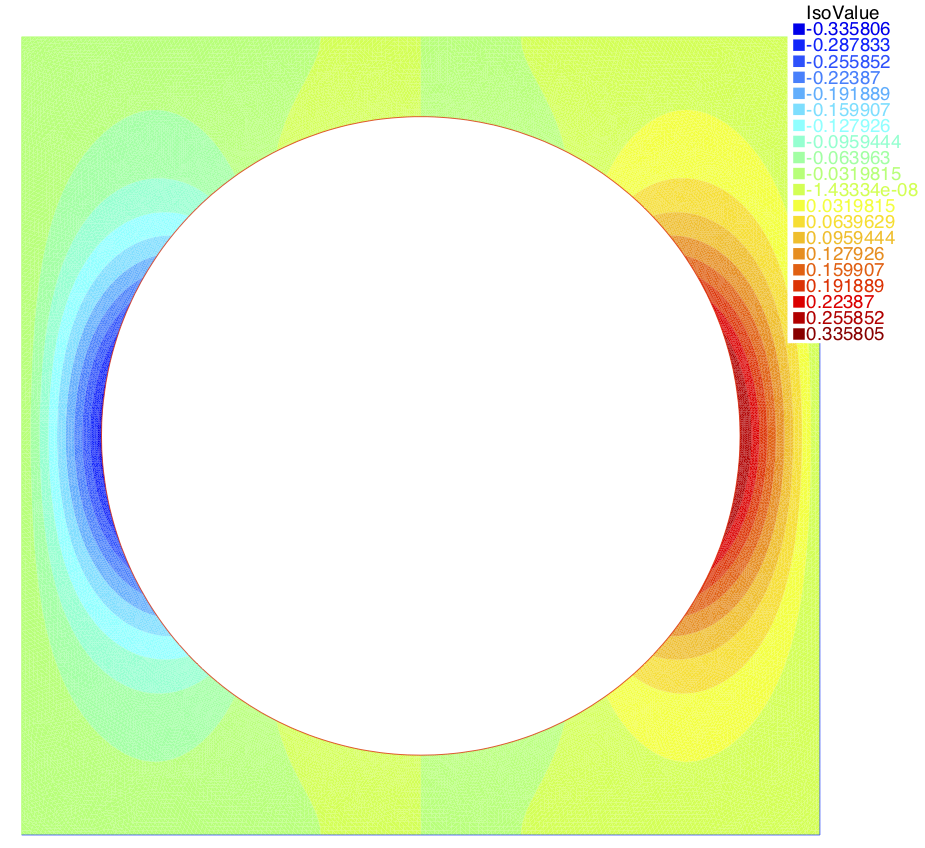}
        \caption{$\omega_1$ when $Q=1.0, u_D=5.0$ and $v_D=0.05$.}
        \label{figW3}
    \end{subfigure}
     ~ 
     \begin{subfigure}[b]{0.45\textwidth}
        \includegraphics[width=\textwidth]{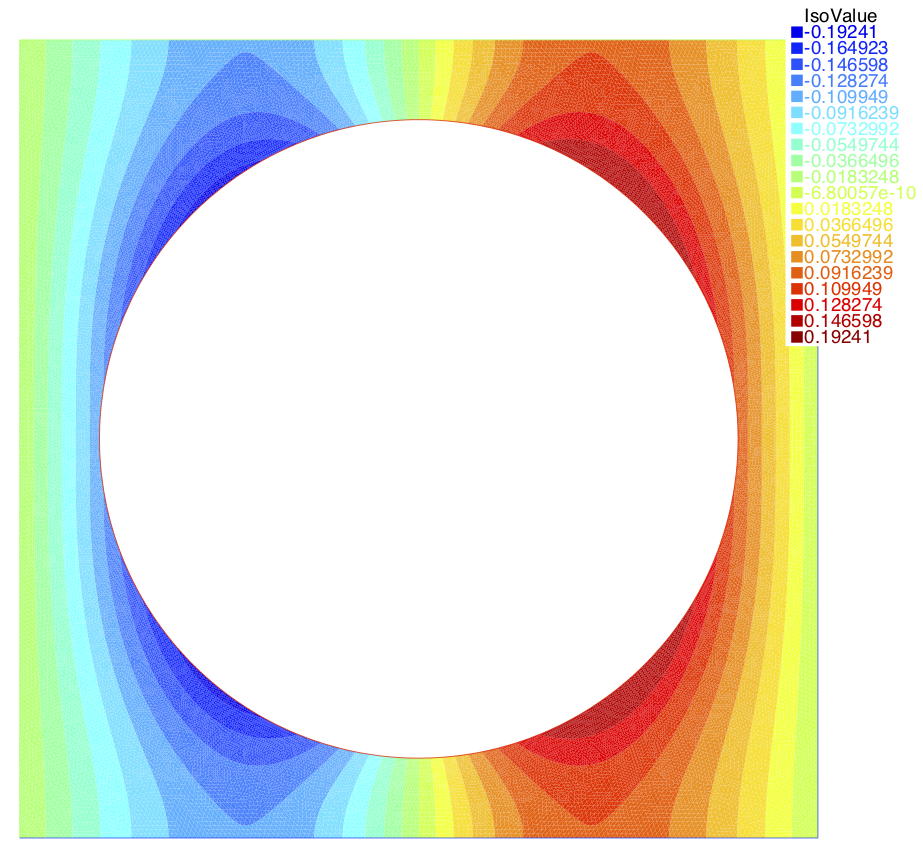}
        \caption{$\omega_1$ when $Q=2.5, u_D=5.0$ and $v_D=0.05$.}
        \label{figW4}
    \end{subfigure}
    \caption{Collage of cell solutions of $\omega_1$ corresponding to various combinations of $u_D, v_D$ and $Q$. Similar patterns of solutions are obtained for $\omega_2$. These solutions determine the boundary condition in the simulated upscaled model.}
    \label{collage2}
\end{figure}
We compute \eqref{heatmassdiffusion} based on various combinations in the data and parameters we used for the simulation of the cell solutions depicted in Figures \ref{collage1} and \ref{collage2}. The cell solutions show different patterns of solutions, which are determined by choices of the parameters $Q, u$ and $v$. When the pattern of solution is as depicted in Figures \ref{figX1} and \ref{figW1}, the cell problem is diffusion-dominated whereas in other cases, for instance, in Figures \ref{figX2}, \ref{figX3} and \ref{figX4}, the cell problem is characterized by a reaction-diffusion phenomenon. The boundary values $u_D$ and $v_D$ are chosen relative to the initial values in such a way that concentration gradients of $u$ and $v$ are initiated in the system. For the first set of simulations, the initial data is chosen as $u_I=1.7\mbox{ and } v_I=0.1$ while $u_D=5.0\mbox{ and } v_D=0.05$ are fixed to the left boundary. 
%%%
\begin{figure}[!htp]
\centering
\includegraphics[scale=0.65]{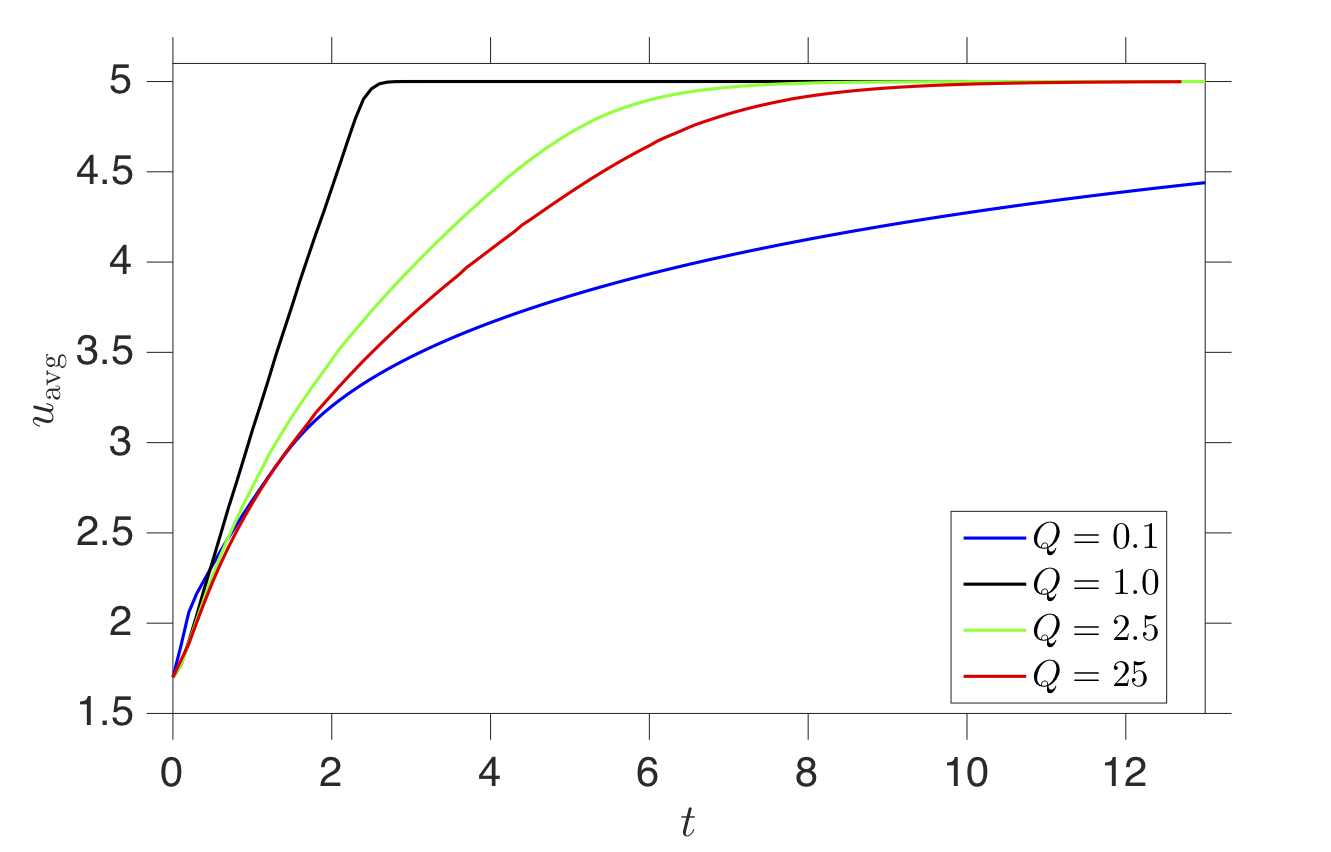}
\caption{Variations of the average temperature as a function of time and for various values of the heat release $Q$.}
\label{heatrelease}
\end{figure}
By varying the value of the heat release (\mbox{$Q=0.1, 1.0, 2.5$ and $25$}), we examine the behavior of the quasilinear system by calculating the average temperature defined as: 
\begin{align}
u_{\rm avg}(t)=\dfrac{1}{|\Om|}\int\limits_{\Om}u(t,x) dx.
\end{align}
\begin{figure}[!htp]
    \centering
    \begin{subfigure}[b]{0.4\textwidth}
        \includegraphics[scale=0.35]{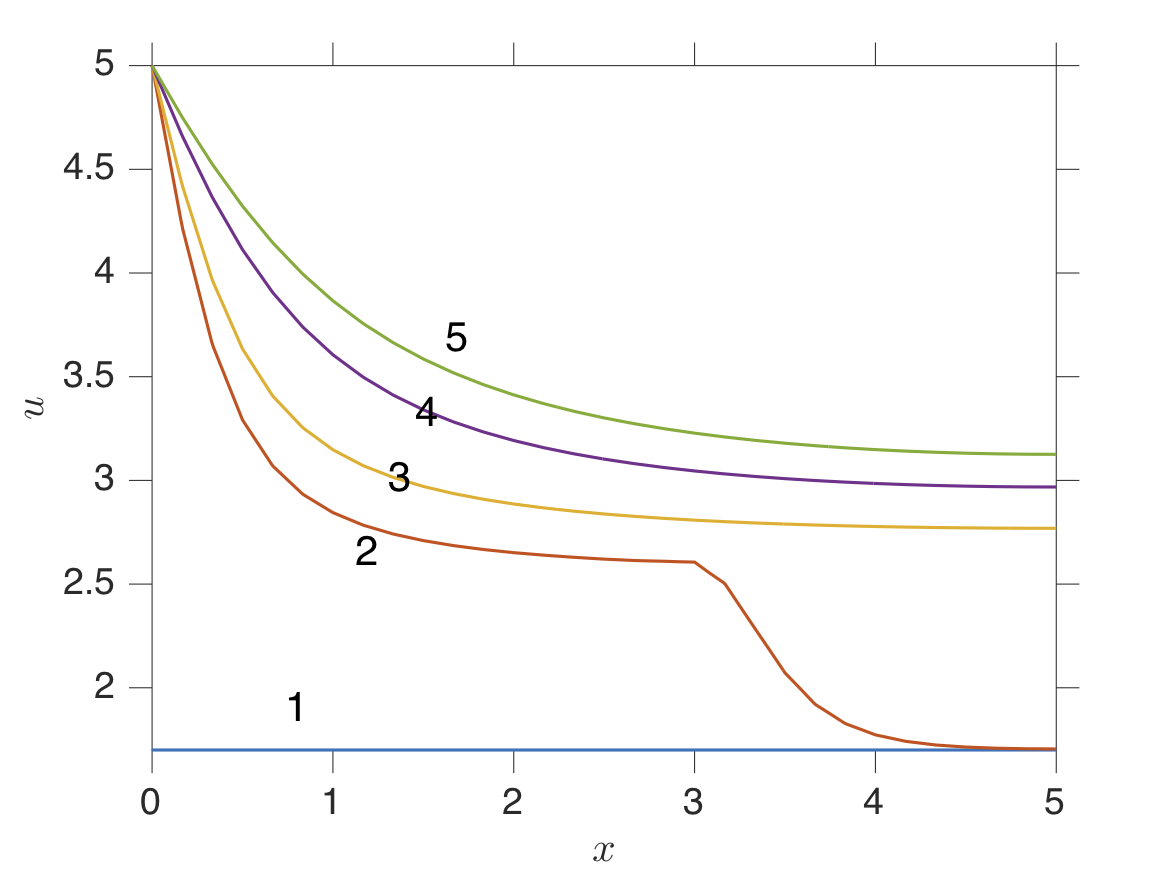}
        \caption{$Q=0.1.$}
        \label{}
    \end{subfigure}
    ~ 
        \begin{subfigure}[b]{0.4\textwidth}
        \includegraphics[scale=0.35]{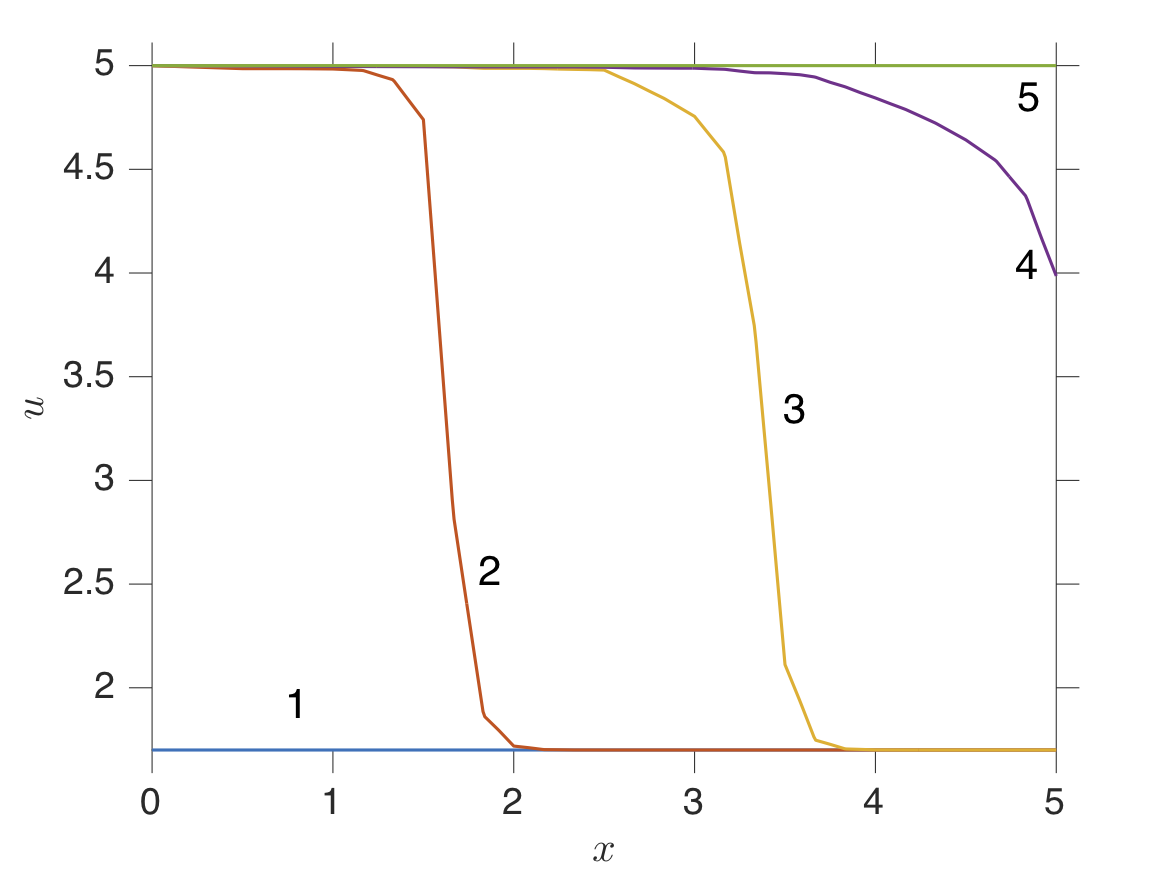}
        \caption{$Q=1.0.$}
        \label{fig:mouse}
    \end{subfigure}
        ~ 
    \begin{subfigure}[b]{0.4\textwidth}
        \includegraphics[scale=0.35]{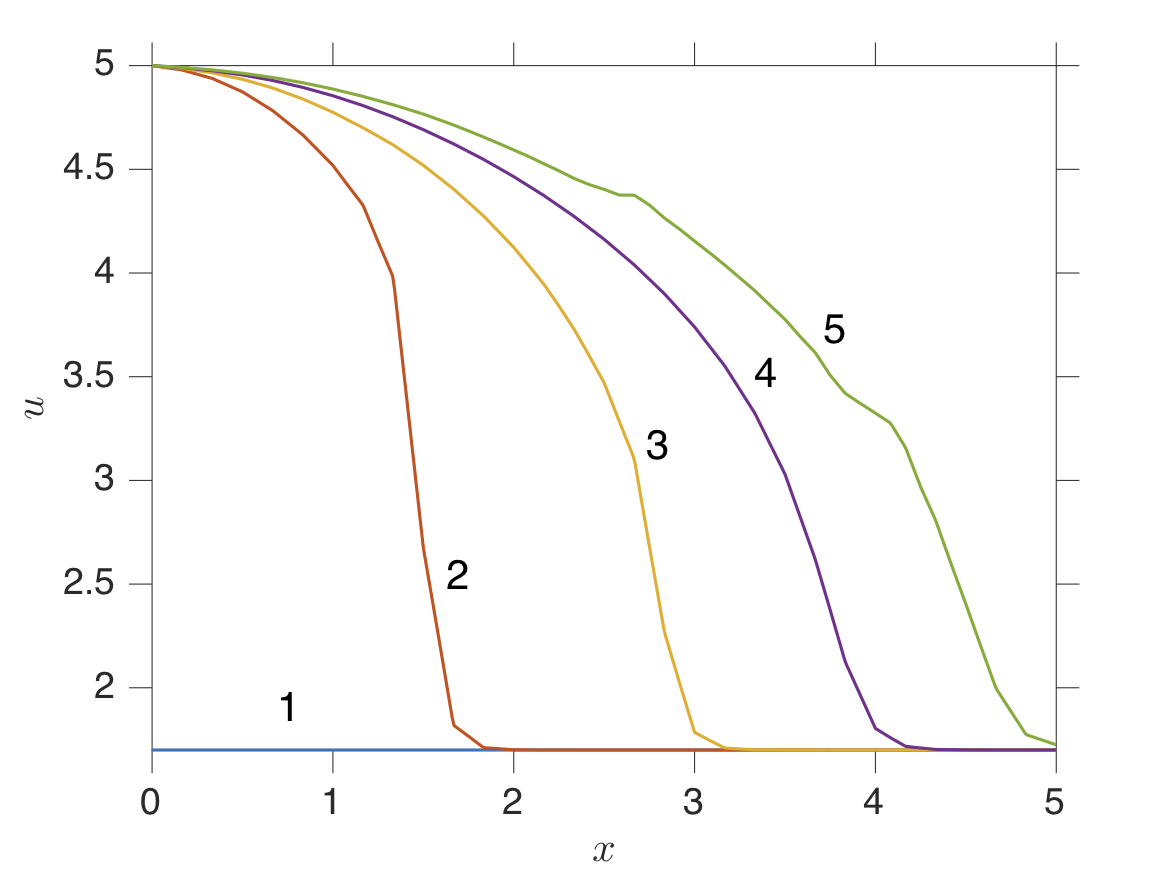}
        \caption{$Q=2.5.$}
        \label{fig:panther}
    \end{subfigure}
     ~ 
     \begin{subfigure}[b]{0.4\textwidth}
        \includegraphics[scale=0.35]{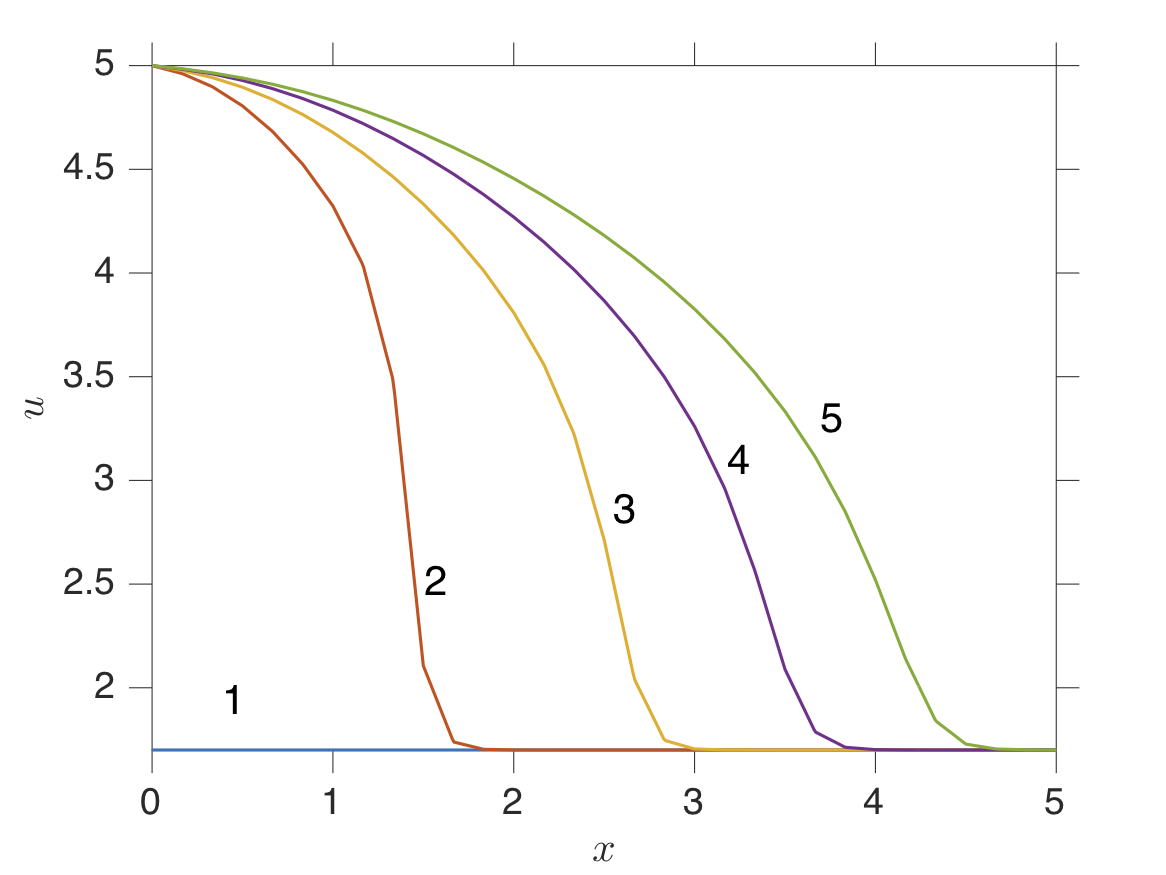}
        \caption{$Q=25.$}
        \label{fig:tiger}
    \end{subfigure}
    \caption{Evolution of the spatial profile of $u$ for $u_D=5., v_D=0.05, u_I=1.7, v_I=0.1$  and for varying values of $Q$. Distinct time points are labeled $1$-$5$ and correspond to the simulation times, $t=0, 0.8, 1.6, 2.4$ and $3.2.$}
    \label{average2}
\end{figure}
Figure \ref{heatrelease} depicts the average temperature as a function of time. It also indicates the behavior of the thermal front at distinct values of $Q$. Within the simulated values of $Q$, the time to reach steady state is shortest between $Q=0.1$ and $Q=1$ (Fig. \ref{average2}). Then, the time decreases for increasing $Q$. We point out that when either the heat release $Q$ or $A$ is zero (cf. Fig. \ref{figX1}), there is no front propagation. On the other hand, in the absence of an oxidizer, i.e. $v_I=v_D=0$, the system is purely diffusion dominated. Figures \ref{collage3} and \ref{collage4} depict the evolution of the spatial distributions of $u$ and $v$ at various simulation times. The front propagation (bottom to top) emanated from the reaction-diffusion processes at the cell level for each macroscopic point $x\in\Om$.
\begin{figure}[!ht]
    \centering
    \begin{subfigure}[b]{0.4\textwidth}
        \includegraphics[width=\textwidth]{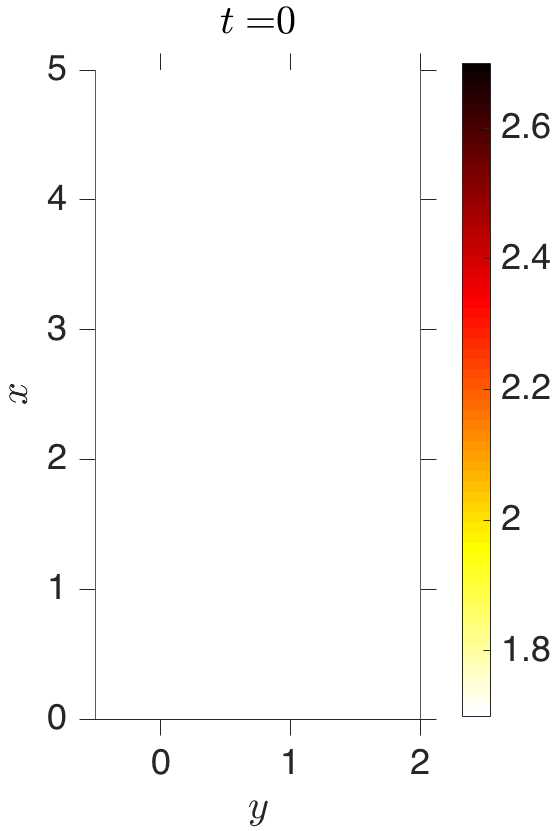}
        \caption*{}
        \label{}
    \end{subfigure}
    ~ 
        \begin{subfigure}[b]{0.4\textwidth}
        \includegraphics[width=\textwidth]{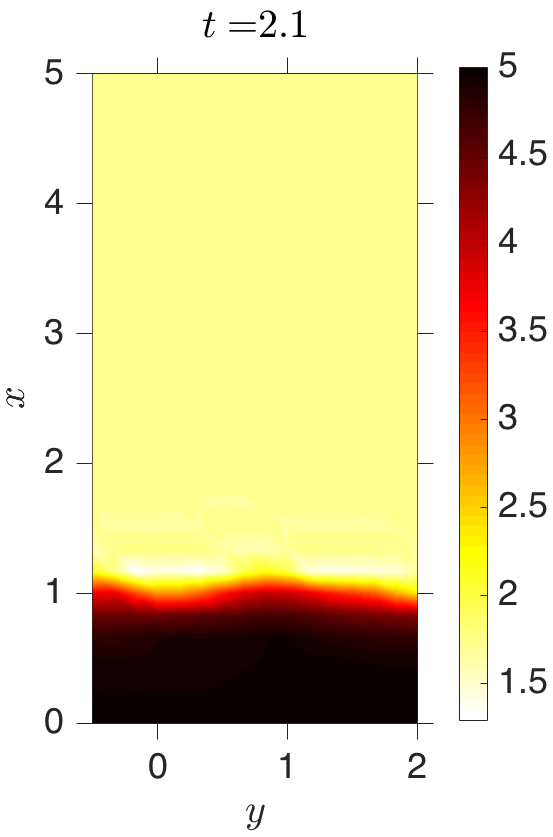}
        \caption*{}
        \label{}
    \end{subfigure}
        ~ 
    \begin{subfigure}[b]{0.4\textwidth}
        \includegraphics[width=\textwidth]{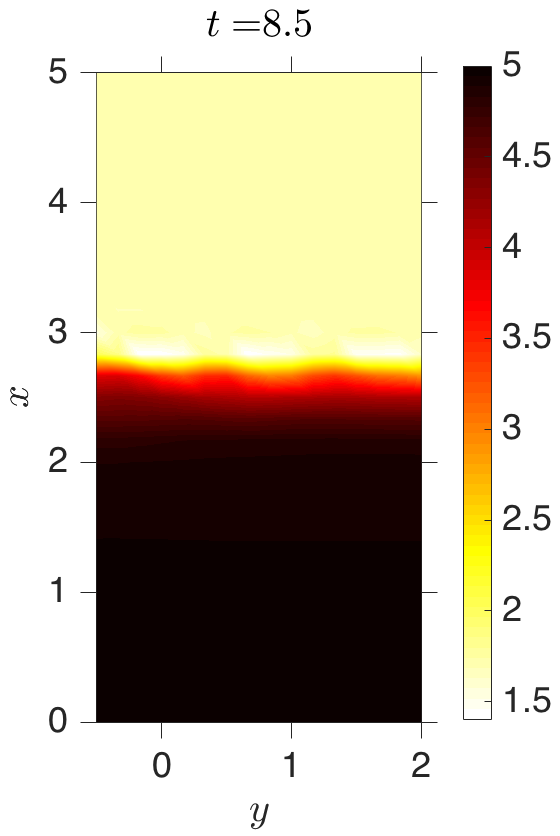}
        \caption*{}
        \label{}
    \end{subfigure}
     ~ 
     \begin{subfigure}[b]{0.4\textwidth}
        \includegraphics[width=\textwidth]{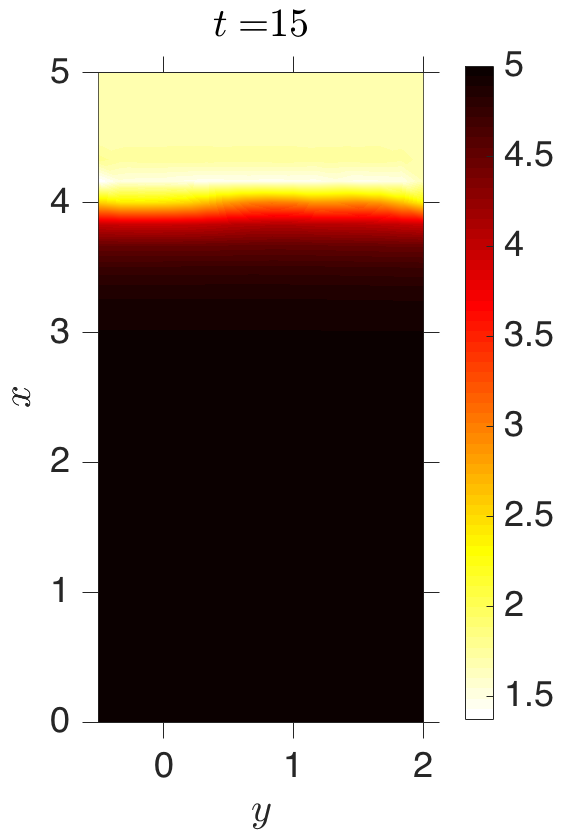}
        \caption*{}
        \label{}
    \end{subfigure}
    \caption{Evolution of the spatial distribution of temperature ($u$) for $u_D=5., v_D=0.75, u_I=1.7, v_I=1.0$ and $Q=0.1.$ Front propagation is from bottom to top.}
    \label{collage3}
\end{figure}

\begin{figure}[!ht]
    \centering
    \begin{subfigure}[b]{0.4\textwidth}
        \includegraphics[width=\textwidth]{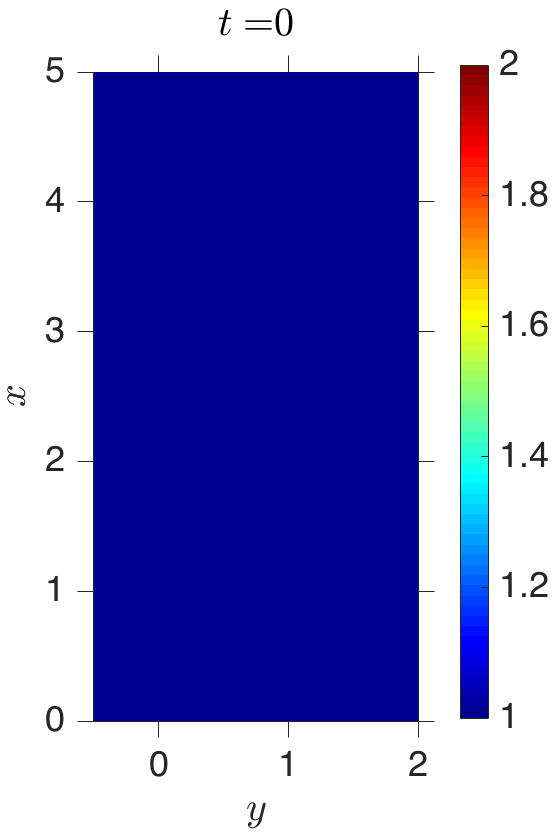}
        \caption*{}
        \label{}
    \end{subfigure}
    ~ 
        \begin{subfigure}[b]{0.4\textwidth}
        \includegraphics[width=\textwidth]{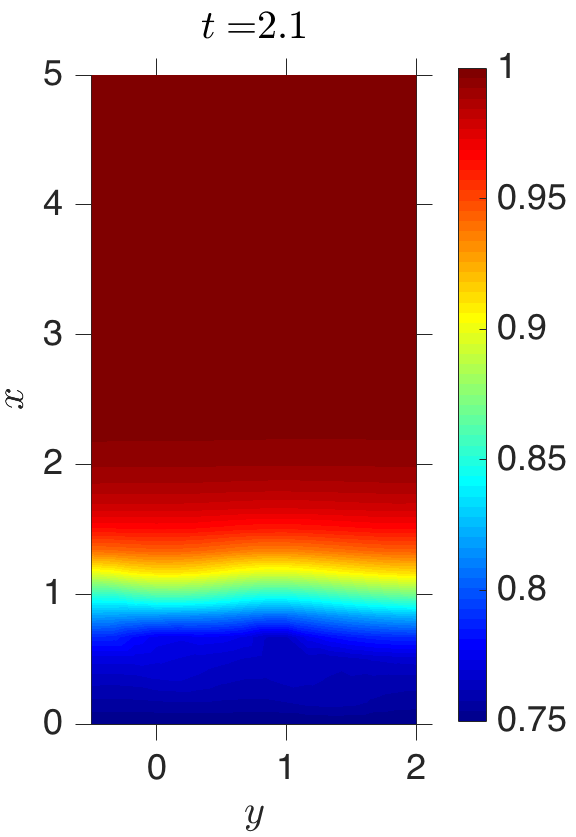}
        \caption*{}
        \label{}
    \end{subfigure}
        ~ 
    \begin{subfigure}[b]{0.4\textwidth}
        \includegraphics[width=\textwidth]{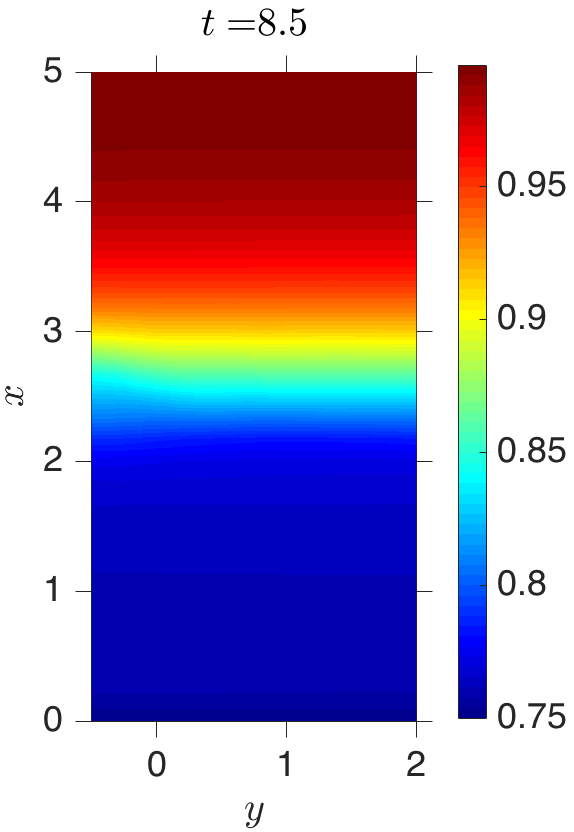}
        \caption*{}
        \label{}
    \end{subfigure}
     ~ 
     \begin{subfigure}[b]{0.4\textwidth}
        \includegraphics[width=\textwidth]{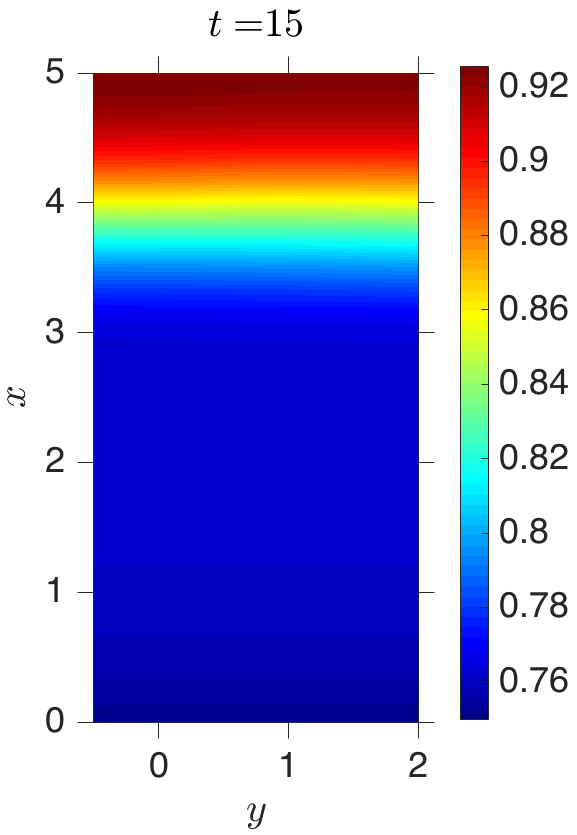}
        \caption*{}
        \label{}
    \end{subfigure}
    \caption{Evolution of the spatial distribution of concentration ($v$) for $u_D=5., v_D=0.75, u_I=1.7, v_I=1.0$ and $Q=0.1.$ Front propagation is from bottom to top.}
    \label{collage4}
\end{figure}
%%%%%%%%%%%%%%%%%%%%%%%%%%%%%%%%%%%%%%%%%%%%%%%%%%%
\section*{Acknowledgement}
This publication has emanated from research conducted with the financial support of Science Foundation Ireland (SFI) under Grant Number 14/SP/2750. The authors thank Adrian Muntean (Sweden) for useful comments and suggestions during the preparation of this paper. 
%%%%%%%%%%%%%%%%%%%%%%%%%%%%%%%%%%%%%%%%%%%%%%%%%%%
% representative
%%%%%%%%%%%%%%%%%%%%%%%%%%%%%%%%%%%%%%%%%%%%%%%%%%%%%%%%
%\section*{References}

\end{document}